\documentclass[a4paper, 11pt]{amsart}

\usepackage{amsthm, amssymb}
\usepackage{amsmath}
\usepackage{bbm}
\usepackage{graphicx}
\usepackage{hyperref}
\usepackage[dvipsnames]{xcolor}
\usepackage[capitalize,nameinlink,noabbrev,nosort]{cleveref}
\usepackage[inline]{enumitem}
\usepackage{tikz}
\usetikzlibrary{positioning}
\usetikzlibrary{patterns}

\newtheorem{thm}{Theorem}[section]

\newtheorem{notn}[thm]{Notation}
\newtheorem{prop}[thm]{Proposition}
\newtheorem{lem}[thm]{Lemma}
\newtheorem{cor}[thm]{Corollary}
\newtheorem{rem}[thm]{Remark}
\newtheorem{example}[thm]{Example}

\DeclareMathOperator{\desset}{Des}
\DeclareMathOperator{\ndesset}{NDes}
\DeclareMathOperator{\des}{des}
\DeclareMathOperator{\asc}{asc}
\DeclareMathOperator{\maj}{maj}
\DeclareMathOperator{\comaj}{comaj}
\newcommand{\bij}[1]{f_{#1}}
\newcommand{\bwords}[2]{\mathcal{W}_{#1,#2}}
\newcommand{\ubwords}[2]{\underline{\mathcal{W}}_{#1,#2}}
\newcommand{\bwordsk}[1]{\mathcal{W}_{#1}}
\DeclareMathOperator{\T}{T}
\DeclareMathOperator{\F}{F}
\DeclareMathOperator{\rev}{rev}
\DeclareMathOperator{\pos}{pos}

\crefname{notn}{Notation}{Notations}

\begin{document}

\title
{A bijection on balanced words reversing both $\des$ and $\maj$}

\author{Arvind Ayyer}
\address{Arvind Ayyer, Department of Mathematics, 
Indian Institute of Science, Bangalore  560012, India.}
\email{arvind@iisc.ac.in}

\author{Naren Sundaravaradan}
\address{Naren Sundaravaradan, Get My parking, 17th Cross Rd, HSR Layout, Bengaluru 560102, India }
\email{nano.naren@gmx.com}

\date{\today}

\begin{abstract}
Balanced words on a finite alphabet are those words in which every letter of the alphabet occurs the same number of times.
The notion of descents and major index extends in a natural way to words.
It is known that the bivariate generating polynomials for descents and
major index over balanced words on the alphabet $[k]$ with $n$ occurrences each is 
palindromic, but a bijective proof has been missing even for balanced binary words.
We give an explicit bijection proving this result. 
For permutations (which are also balanced), our bijection is different from the complementation map.
We also show that for balanced binary words, this bijection simultaneously flips the ascent and comajor index as well.
\end{abstract}

\keywords{balanced words, descent, major index, bijective proof}
\subjclass[2020]{05A05, 05A19}

\maketitle

\section{Introduction}

The combinatorics of words is a fascinating topic with a long history and many elegant results. In this work, we are concerned with the descent and major index on balanced words, in which each letter occurs the same number of times; see below for the precise definitions. 
It is known due to work of Tielker~\cite[Proposition 4.3.21]{tielker-2023}\footnote{the argument in the proof of \cite[Proposition 2.12]{carnevale-voll-2018} contains a gap. We thank Angela Carnevale for pointing this out.} 
that the bivariate generating polynomial of these statistics is
palindromic.
Carnevale--Voll conjectured~\cite[Conjectures A and B]{carnevale-voll-2018} further properties of this polynomial. Habsieger~\cite{habsieger-2020} and Carnevale~\cite{carnevale-2017} gave some partial results towards the conjectures.
The main result of this paper is a bijective proof of this bivariate palindromicity.

Permutations are a special case of balanced words and there is a rich history of beautiful bijections, one of the most famous being Foata's  bijection~\cite{foata-1968} exchanging the inversion number and the major index, both being Mahonian statistics. Loehr~\cite[Section 6.9]{loehr-2011} has extended
this bijection to words with fixed content, including balanced words.
Here, we are concerned with descents, which is a so-called Eulerian statistic.
The so-called `fundamental bijection' due to Foata~\cite{foata-schut-1970}
exchanges the number of descents and excedances.
Han~\cite{han-1992} gave a similar bijection on words with fixed content exchanging the major index and another Mahonian statistic called the $Z$-statistic, introduced by Zeilberger--Bressoud~\cite{zeilberger-bressoud-1985}
in their proof of Andrews' $q$-Dyson conjecture.

We now begin with the basic definitions and state our results.
Let $k$ and $n$ be fixed positive integers.
We will work with words over the alphabet $[k] = \{1, \dots, k\}$.
A word $w$ is said to be \emph{balanced} if each letter in the alphabet occurs
the same number of times in $w$.
Let $\bwords{n}{k}$ be the set of balanced words on $[k]$ with $n$ occurrences each, and
let $\bwordsk{k} = \cup_n \bwords{n}{k}$ be the set of all balanced words over $[k]$. It is easy to see that 
\begin{equation}
\label{bwords size}
|\bwords{n}{k}| = \binom{nk}{n, \dots, n}.
\end{equation}

For any finite word $w$, balanced or otherwise, let $\ell(w)$ be its length.
The \textit{descent set} of a word $w$ is defined by
\(
\desset(w) = \{i \in [\ell(w) - 1] \mid w_i > w_{i+1} \},
\)
and the number of \textit{descents} is $\des (w) = |\desset(w)|$. 
The \textit{major index} of $w$ is $\maj (w) = \sum_{i \in \desset(w)} i$.
We can similarly define \textit{ascents} and the \textit{comajor index}.
It is easy to see that the ascents and
descents are symmetric in $\bwordsk k$ because for $w = w_1 \dots w_{kn}$,
its complement $w' = (k+1-w_1)\dots (k+1-w_{2n})$ satisfies $\asc(w') = \des(w)$ and $\des(w') = \asc(w)$. 
It is quite clear that the minimum value of both $\des$ and $\maj$ over $\bwords
nk$
is $0$ corresponding to the unique word $w_{\min} = 1 \dots 1 2 \dots 2 k \dots k$. 
A little more work shows that the maximum values of $\des$ and $\maj$ in $\bwords nk$ are $(k-1)n$ and $n^2
  \binom{k}{2}$ respectively, and both occur for the unique word $w_{\max} = k(k-1)\cdots 1 \
  k(k-1)\cdots 1\ \cdots \ k(k-1)\cdots 1$.

The bivariate generating polynomial of $\des$ and $\maj$ on balanced words is 
\[
C_{n,k}(x, q) = \sum_{w \in \bwords nk} x^{\des(w)} q^{\maj(w)}.
\]

\begin{thm}[{\cite[Proposition 2.12]{carnevale-voll-2018}}]
\label{thm:main}
The bivariate generating polynomial of $\des$ and $\maj$ on balanced words is palindromic, i.e.
\[
C_{n, k}(x^{-1}, q^{-1}) = x^{-n(k-1)} q^{-n^2 \binom{k}{2}} C_{n,k}(x, q).
\]
\end{thm}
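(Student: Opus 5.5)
The plan is to prove \cref{thm:main} by generating-function reciprocity rather than by a bijection. There are three steps: express $C_{n,k}$ through a MacMahon-type series, exploit a reflection symmetry of the coefficients, and translate that symmetry back to $C_{n,k}$. Throughout, write $N = nk$ and let $\binom{m}{n}_q$ denote the Gaussian binomial coefficient.

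\textbf{Step 1 (MacMahon-type identity).} I would first establish
\[
\frac{C_{n,k}(x,q)}{\prod_{i=0}^{N}(1-xq^i)} \;=\; \sum_{r\ge 0} x^r \binom{n+r}{n}_q^{\,k}.
\]
The proof is the standard ``barred word'' (or $P$-partition) argument.
\begin{enumerate}[label=(\roman*)]
\item Fix $w \in \bwords nk$. Consider weakly decreasing sequences $r \ge \lambda_1 \ge \dots \ge \lambda_N \ge 0$ with $\lambda_j > \lambda_{j+1}$ whenever $j \in \desset(w)$. Weighting each such sequence by $x^r q^{\sum_j \lambda_j}$ and summing gives $x^{\des w} q^{\maj w}/\prod_{i=0}^{N}(1-xq^i)$.
\item Conversely, for fixed $r$, these pairs $(w,\lambda)$ are in bijection with $k$-tuples of multisets: for each letter $a$, the multiset of the $n$ values of $\lambda$ at the positions of $a$, all lying in $\{0,\dots,r\}$.
\item The bijection sorts the pairs (value, letter) with value decreasing and, among equal values, letter increasing. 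This forces descents exactly where strictness is needed.
\item Each multiset contributes $\binom{n+r}{n}_q$, giving the right-hand side.
\end{enumerate}

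\textbf{Step 2 (reflection symmetry).} Write $P(z) = \prod_{j=1}^{n}(1-zq^{j})/(q;q)_n$, so that $\binom{n+r}{n}_q = P(q^r)$. Then $F(z)=P(z)^k$ is a polynomial in $z$ of degree $N$. By Popoviciu/Stanley reciprocity for sums $\sum_{r\ge0} F(q^r)x^r$, we have
\[
\sum_{r\ge 0} F(q^r)x^r = -\sum_{r\ge 1} F(q^{-r})x^{-r}
\]
as rational functions in $x$. This holds because each monomial $z^m$ contributes $1/(1-xq^m)$, and the identity $1/(1-y)=-y^{-1}/(1-y^{-1})$ does the rest. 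Next I compute the reflection $F(q^{-r-n-1})$. Since $1-q^{-r-n-1+j} = -q^{-r-n-1+j}(1-q^{r+n+1-j})$, we obtain
\[
P(q^{-r-n-1}) = (-1)^n q^{-n(r+n+1)+\binom{n+1}{2}} P(q^r),
\]
so $F(q^{-r-n-1}) = (-1)^{N} q^{-k(n r + \binom{n+1}{2})}F(q^r)$. Moreover $F(q^{-s})=0$ for $1\le s\le n$. Hence the sum over $r\ge1$ reindexes to a sum over $r\ge0$ with $x^{-r-n-1}$.

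\textbf{Step 3 (comparison and bookkeeping).} Substituting Step 2 into Step 1 gives an identity between $C_{n,k}(x,q)/\prod_{i=0}^N(1-xq^i)$ and $x^{-n-1}\sum_r x^{-r}q^{-knr}F(q^r)$ times an explicit power of $q$. The latter is the Step 1 series evaluated at $(x^{-1}q^{-kn}, q)$. I would then need to relate this to $(x^{-1},q^{-1})$. This uses the invariance $\binom{m}{n}_{q^{-1}} = q^{-n(m-n)}\binom{m}{n}_q$ together with $\prod_i(1-x^{-1}q^{-i}) = (-1)^{N+1}x^{-N-1}q^{-\binom{N+1}{2}}\prod_i(1-xq^{i})$. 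Clearing denominators should yield exactly
\[
C_{n,k}(x^{-1},q^{-1}) = x^{-n(k-1)}q^{-n^2\binom k2}C_{n,k}(x,q).
\]

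The main obstacle is this final exponent bookkeeping. The powers of $x$ come from $x^{-n-1}$ and from $x^{-N-1}$ in the denominator, and $N-n=n(k-1)$ matches the claimed $x$-exponent. The $q$-exponents combine $\binom{N+1}{2}$, $k\binom{n+1}{2}$ and $kn^2$ in some way I have not yet pinned down; I would check $q^{n^2\binom{k}{2}}$ carefully, using the small case $n=1$ (permutations) as a sanity test. If the algebra misbehaves, the fallback is to check the extremal words $w_{\min}, w_{\max}$ to fix normalisations.
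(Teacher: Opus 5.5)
Your argument is correct, and it is a genuinely different proof from the paper's. The paper never uses generating functions. It builds an explicit map $f_k$ on $\bwords{n}{k}$: for $k=2$ by complementing the boolean arrays $(\mu_w,\nu_w)$, and for $k\ge3$ recursively via $f_k(w)=\Gamma_k^{-1}(f_{k-1}(w_{-k}),\nu_w,\mu_w)$. It then shows by induction on $k$ that $\des(w)+\des(f_k(w))=(k-1)n$ and $\maj(w)+\maj(f_k(w))=n^2\binom{k}{2}$. The key step there is \cref{lem:maj_reminder}, which isolates the contribution of the letter $k$ to $\maj$; palindromicity of $C_{n,k}$ follows immediately.

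You instead combine MacMahon's formula, reciprocity for $\sum_r F(q^r)x^r$, and the reflection $r\mapsto -r-n-1$ of $\binom{n+r}{n}_q$. The bookkeeping you left open does close:
\begin{enumerate*}[label=(\roman*)]
\item because $\binom{n+r}{n}_{q^{-1}}=q^{-nr}\binom{n+r}{n}_q$, the series $\sum_{r}y^r\binom{n+r}{n}_q^k$ at $y=x^{-1}q^{-N}$ is exactly the same series with $q$ replaced by $q^{-1}$ at $y=x^{-1}$, so no separate $kn^2$ term survives;
\item the two factors $(-1)^{N+1}$ cancel;
\item the $x$-exponent is $(N+1)-(n+1)=n(k-1)$;
\item the $q$-exponent is $\binom{N+1}{2}-k\binom{n+1}{2}=n^2\binom{k}{2}$.
\end{enumerate*}

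Your route is shorter and shows precisely where balancedness enters. All $k$ factors share the zeros $q^{-1},\dots,q^{-n}$ and the same reflection centre, which fails for unequal multiplicities; content $(2,1)$ already gives the non-palindromic $1+xq+xq^2$. What your route cannot give is a bijection, which is the whole point of the paper. The explicit $f_k$ also carries structure invisible to a counting argument: $f_k(w)_{-k}=f_{k-1}(w_{-k})$ (\cref{cor:proj}), the simultaneous flip of $\asc$ and $\comaj$ and preservation of mirror-symmetric words when $k=2$, and complementation of descent sets on permutations (\cref{thm:perm_descent_set}).
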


For example, the coefficients of $C_{2, 3}(x, q)$ can be written as the array
\[
\left(\begin{array}{rrrrrrrrrrrrr}
1 & 0 & 0 & 0 & 0 & 0 & 0 & 0 & 0 & 0 & 0 & 0 & 0 \\
0 & 2 & 5 & 6 & 5 & 2 & 0 & 0 & 0 & 0 & 0 & 0 & 0 \\
0 & 0 & 0 & 1 & 6 & 10 & 14 & 10 & 6 & 1 & 0 & 0 & 0 \\
0 & 0 & 0 & 0 & 0 & 0 & 0 & 2 & 5 & 6 & 5 & 2 & 0 \\
0 & 0 & 0 & 0 & 0 & 0 & 0 & 0 & 0 & 0 & 0 & 0 & 1
\end{array}\right),
\]
and it is clear that rotating the array by $180^\circ$ leaves it invariant.

We will construct an explicit bijection proving this result in \cref{sec:Bk_bijection}. The bijection builds on the one constructed for binary words, which we explain first in \cref{sec:B2_bijection}. We also show that the latter respects ascents and the comajor index, as well as a property we call mirror-symmetry.

An extended abstract of this preprint has appeared in the proceedings of FPSAC 2026~\cite{ayyer-sundaravaradan-2026}.

\section{Bijection on binary words}
\label{sec:B2_bijection}

A \textit{boolean array} is a word in the letters $\T$ and $\F$.\footnote{
We use $\T$ and $\F$ instead of $1$ and $0$ respectively to avoid confusion with the symbols in the alphabet.}
If $\mu$ is a boolean array, let
$\bar{\mu}$ be its \textit{complement}, i.e, the boolean array swapping the $\T$'s for $\F$'s and
vice versa in $\mu$.
Let $w \in \bwords{n}{2}$. Define the boolean array $\mu_w$ with respect to the
occurrences of $1$ in $w$ as
\[
  (\mu_w)_i =
  \begin{cases}
    \T &\text{if the } i\text{'th } 1 \text{ in } w \text{ is immediately preceded by 2},\\
    \F &\text{otherwise}.
  \end{cases}
\]
Similarly, define the boolean array $\nu_w$ with respect to the occurrences of $2$ in
$w$ as
\[
  (\nu_w)_i =
  \begin{cases}
    \T &\text{if the } i\text{'th } 2 \text{ in } w \text{ is followed by 1},\\
    \F &\text{otherwise}.
  \end{cases}
\]
Let $B_n = \{\T, \F\}^n$ be the set of boolean arrays of length $n$. 
For $\mu \in B_n$, let $n_{\T}(\mu)$ (resp.
$n_{\F}(\mu)$) be the number of $\T$'s (resp. $\F$'s) in $\mu$. 
Let $B_n^k = \{ \mu \in B_n \mid n_{\T}(\mu) = k\}$ be the set of boolean arrays having $k$ $\T$'s.
Define
\[
P_n = \bigsqcup_{0 \le k \le n} B_n^k \times B_n^k.
\]
It is clear that the cardinality of $P_n$ is given by
\[
|P_n| = \sum_{k = 0}^n \binom{n}{k}^2 = \binom{2n}{n},
\]
which is also the cardinality of $\bwords{n}{2}$.

Given $w \in \bwords n2$ with $\des(w) = d$, 
it is clear that $w$ can be uniquely written in the form
\[
w = 1^{p_1} 2^{q_1} \ 2 1 \ 1^{p_2} 2^{q_2} \ 2 1 \cdots 2 1 \ 1^{p_{d+1}}2^{q_{d+1}}.
\]
The descents are then at positions $p_1 + q_1 + 1, p_1 + q_1 + 2 + p_2 + q_2 + 1, 
\dots, p_1 + q_1 + 2 + \cdots + p_d + q_d + 1$.
Define a map $\phi: \bwords n2 \to P_n$ by setting
$\phi(w) = (\mu_w, \nu_w)$ where
  \begin{equation}
  \label{def mu}
    \mu_w = \F^{p_1}\T \F^{p_2} \T \cdots \F^{p_d} \T \F^{p_{d+1}},
  \end{equation}
and
  \begin{equation}
  \label{def nu}
    \nu_w = \F^{q_1}\T \F^{q_2} \T \cdots \F^{q_d} \T \F^{q_{d+1}}.
  \end{equation}
Clearly $\phi$ is a bijection that takes $\des(w)$ to $n_T(\mu_w) = n_T(\nu_w)$.
Now define the map $\bij{2} : \bwordsk{2} \rightarrow \bwordsk{2}$ by $\bij{2}(w) = \phi^{-1}(\bar{\mu}_w, \bar{\nu}_w)$.
The main result of this section is the following bijective proof of \cref{thm:main} for $k = 2$.

\begin{thm}
  \label{thm:B2}
  The map $f_2$ on $\bwordsk{2}$ is an involution (and hence a bijection) satisfying $\des(f_2(w)) = n -
  \des(w)$ and $\maj(f_2(w)) = n^2 - \maj(w)$ for all $w \in \bwords n{2}$.
\end{thm}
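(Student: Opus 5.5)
The plan is to express both statistics of $w$ purely in terms of the pair $\phi(w)=(\mu_w,\nu_w)$, and then see how they change under complementing both arrays.

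\emph{Bijectivity and involution.} For $w\in\bwords n2$ with $\des(w)=d$, we have $(\mu_w,\nu_w)\in B_n^d\times B_n^d$. Complementing gives $(\bar\mu_w,\bar\nu_w)\in B_n^{n-d}\times B_n^{n-d}\subseteq P_n$. So $\bij{2}(w)=\phi^{-1}(\bar\mu_w,\bar\nu_w)$ is well defined and lies in $\bwords n2$. Since $\phi$ is a bijection, $\phi(\bij{2}(w))=(\bar\mu_w,\bar\nu_w)$. Complementing twice is the identity, so $\bij{2}(\bij{2}(w))=\phi^{-1}(\mu_w,\nu_w)=w$. Hence $\bij{2}$ is an involution, and in particular a bijection.

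\emph{Descents.} Since $\phi$ sends $\des$ to the number of $\T$'s, $\des(\bij{2}(w))=n_{\T}(\bar\mu_w)=n-d$.

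\emph{Major index.} This is the key step: write $\maj$ in terms of $\T$-positions. Let $a_1<\dots<a_d$ be the positions of the $\T$'s in $\mu_w$, and $b_1<\dots<b_d$ those in $\nu_w$. By \eqref{def mu} and \eqref{def nu},
\[
a_j=p_1+\cdots+p_j+j,\qquad b_j=q_1+\cdots+q_j+j.
\]
From the decomposition of $w$, the $j$-th descent sits at position
\[
\sum_{i\le j}(p_i+q_i)+2(j-1)+1=a_j+b_j-1.
\]
Let $S(\mu)$ denote the sum of the $\T$-positions of a boolean array $\mu$. Then
\[
\maj(w)=S(\mu_w)+S(\nu_w)-\des(w).
\]
This identity holds for every word, so we may apply it to $\bij{2}(w)$ as well.

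The $\T$-positions of $\bar\mu$ are exactly the complement in $[n]$ of the $\T$-positions of $\mu$, so $S(\bar\mu)=\binom{n+1}{2}-S(\mu)$. Therefore
\begin{align*}
\maj(\bij{2}(w)) &= n(n+1)-S(\mu_w)-S(\nu_w)-(n-d)\\
&= n(n+1)-(\maj(w)+d)-n+d\\
&= n^2-\maj(w).
\end{align*}

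The only step needing care is verifying the descent-position formula $a_j+b_j-1$, specifically the indexing of the blocks $21$. Everything else is formal.
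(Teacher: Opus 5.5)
Your proof is correct and follows essentially the same route as the paper: both show that the $j$-th descent of $w$ sits at position $\pos_{\mu_w}(j)+\pos_{\nu_w}(j)-1$, and then use the fact that complementation exchanges the $\T$-positions and $\F$-positions of each array. Your closed form $\maj(w)=S(\mu_w)+S(\nu_w)-\des(w)$, combined with $S(\bar\mu)=\binom{n+1}{2}-S(\mu)$, is a slightly tidier packaging of the paper's argument, which instead counts how many times each letter of $\mu_w$ and $\nu_w$ occurs before the $i$-th $\T$ or the $i$-th $\F$.
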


\begin{proof}[Proof of \cref{thm:B2}]
From the definitions above, it is clear that applying $f_2$ on $f_2(w)$ gives back $w$.
  Let $w' = \bij{2}(w)$. Then,
  \begin{align*}
    \des(w) + \des(w') &= n_{\T}(\mu_w) + n_{\T}(\mu_{w'})
    = n_{\T}(\mu_w) + n_{\T}(\bar{\mu}_w) = n.
  \end{align*}
  Suppose $w$ has $d$ descents at positions $a_1 < \dots < a_d$ so that
  $\maj(w) = a_1 + \cdots + a_d$. Then, clearly
  \begin{align*}
    a_i = |\{w_j \mid j < a_i, w_j = 1\}| + |\{w_j \mid j < a_i, w_j = 2\}| + 1.
  \end{align*}
  Now, let $\mu_w$ and $\nu_w$ be defined as in \eqref{def mu} and \eqref{def nu}
  respectively. Then
  \begin{align*}
   a_i = \left(\sum_{j=1}^{i-1} (p_j+1) + p_i \right) + 
   \left(\sum_{j=1}^{i-1} (q_j+1) + q_i \right) + 1
  \end{align*}
  and thus, $a_i$ is the position of the $i$'th $\T$ in $\mu_w$ plus
  the position of the $i$'th $\T$ in $\nu_w$ minus $1$.
  Said another way, we count each letter in $\mu_w$ and $\nu_w$ before the
  $i$'th $\T$ once and then add $1$, to obtain $a_i$.

  Let $w' = \bij{2}(w)$. By the definition of $\bij{2}$, $\mu_{w'} = \bar{\mu}_w$ and 
  $\nu_{w'} = \bar{\nu}_w$.
  Let $b_1 < \cdots < b_{n-d}$ be the positions of the $n-d$ descents of $w'$.
  By the same argument as above, $b_i$ counts each letter in 
  $\bar{\mu}_w$ and $\bar{\nu}_w$ before the $i$'th $\T$ once with a $1$ added.
  Equivalently, $b_i$ counts each letter in 
  $\mu_w$ and $\nu_w$ before the $i$'th $\F$ once with a $1$ added.

  Thus, when we add $\maj(w)$ and $\maj(\bij 2(w))$, we are counting 
  the $j$'th letter in both $\mu_w$ and $\nu_w$ exactly $(n-j)$ times, 
  plus an extra $1$. Therefore,  
  \begin{align*}
   \maj(w) + \maj(\bij{2}(w)) = \sum_{j=1}^n (j-1) + \sum_{j=1}^n (j-1) + n = n^2,
  \end{align*}
  completing the proof.
\end{proof}

See \cref{fig:B2} for an example of computing $\bij 2(w)$ for a word $w \in \bwords 52$.

\begin{center}
\begin{figure}[h!]
  \begin{tikzpicture}
    \node (start) {$w = \phi(\mu_w, \nu_w) = 122{\color{red}21}1{\color{red}21}12$};
    \node (mu) [below left=0.5cm and 1cm of start] {$\mu_w = \F{\color{red}\T}\F{\color{red}\T}\F$};
    \node (nu) [below right=0.5cm and 1cm of start] {$\nu_w = \F\F{\color{red}\T}{\color{red}\T}\F$};
    \node (muc) [below=0.5cm of mu] {$\bar{\mu}_w = {\color{red}\T}\F{\color{red}\T}\F{\color{red}\T}$};
    \node (nuc) [below=0.5cm of nu] {$\bar{\nu}_w = {\color{red}\T}{\color{red}\T}\F\F{\color{red}\T}$};
    \node (end) [below right=0.5cm and 1.5cm of muc] {$f_2(w) = \phi^{-1}(\bar{\mu}_w,\bar{\nu}_w) = {\color{red}21}1{\color{red}21}122{\color{red}21}$};

    \draw [-] (start) -- (mu);
    \draw [-] (start) -- (nu);
    \draw [-] (mu) -- (muc);
    \draw [-] (nu) -- (nuc);
    \draw [-] (muc) -- (end);
    \draw [-] (nuc) -- (end);
  \end{tikzpicture}
  \caption{ For $w = 1222112112$,   $\des(w) = 2$, $\maj(w) = 11$, $\des(\bij 2(w))=5-2=3$, and $\maj(\bij 2(w))
    = 25 - 11 = 14$. The $\T$'s and the descents in the words have
    both been written in red to make clear the construction of the boolean arrays.}
  \label{fig:B2}
\end{figure}
\end{center}

\subsection{Ascents}

There is a natural map on $\bwords n2$ that flips ascents and descents, and
that is the map that interchanges $1$'s and $2$'s in the word.
However, this map does not satisfy \cref{thm:B2}. However, we show
that $f_2$ satisfies the ascent equivalents of the two lemmas as well, as
illustrated in the example below.

\begin{example}
For the example $w = 1222112112$ in \cref{fig:B2}, $\asc(w) = 3$ and $\comaj(w) = 
16$. Moreover, $\asc(f_2(w)) = 5 - 3 = 2$ and $\comaj(f_2(w)) = 25 - 16 = 9$.
\end{example}

We start by defining the dual of $f_2$, which we call $\underline{f}_2$, that works on ascents instead of
descents. Given $w \in \bwords n2$ with $\asc(w) = d$, we can write $w$ uniquely
in the form
\[
  w = 2^{q_1}1^{p_1} \ 12 \ 2^{q_2}1^{p_2}\ 12 \dots 12 \ 2^{q_{d+1}}1^{p_{d+1}}.
\]
Define the map $\underline{\phi} : \bwords n2 \rightarrow P_n$ by setting $\underline{\phi}(w) =
(\underline{\mu}_w,\underline{\nu}_w)$ where
\begin{align*}
  \underline{\mu}_w &= \F^{p_1} \T \F^{p_2} \T \dots \F^{p_d} \T \F^{p_{d+1}},\\
  \underline{\nu}_w &= \F^{q_1} \T \F^{q_2} \T \dots \F^{q_d} \T \F^{q_{d+1}}.
\end{align*}
Finally, define the map $\underline{f}_2 : \bwordsk 2 \rightarrow \bwordsk 2$ by $\underline{f}_2(w) = \underline{\phi}^{-1}(\bar{\underline{\mu}}_w, \bar{\underline{\nu}}_w)$.
It should be clear that $\underline{f}_2$ is a bijection and that the proof of
\cref{thm:B2} can clearly be repurposed to prove that
$\asc(w) + \asc(\underline{f}_2(w)) = n$ and $\comaj(w) + \comaj(\underline{f}_2(w)) = n^2$ for all $w
\in \bwords n2$. We now prove something a little more surprising.

\begin{thm}
  For $w \in \bwords n2$, $\asc(w) + \asc(f_2(w)) = n$ and $\comaj(w) + \comaj(f_2(w)) = n^2$.
\end{thm}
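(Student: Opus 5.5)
The plan is to reduce this statement to the proof of \cref{thm:B2} by building ascent analogues of $\mu_w,\nu_w$ and showing that $f_2$ complements them. Define two boolean arrays of length $n$:
\begin{itemize}
\item $\alpha_w$, with $(\alpha_w)_i=\T$ if and only if the $i$'th $1$ in $w$ is immediately followed by a $2$;
\item $\beta_w$, with $(\beta_w)_i=\T$ if and only if the $i$'th $2$ in $w$ is immediately preceded by a $1$.
\end{itemize}
Each ascent of $w$ is a factor $12$. The $1$ in it is the $i$'th $1$ for some $i$, and the $2$ is the $i$'th $2$, since both are the $i$'th ascent read left to right. So $\asc(w)=n_{\T}(\alpha_w)=n_{\T}(\beta_w)$. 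Moreover, the position of the $i$'th ascent equals the position of the $i$'th $\T$ in $\alpha_w$ plus the position of the $i$'th $\T$ in $\beta_w$, minus $1$. This is because the ascent position is the index of that $1$, namely the number of $1$'s up to and including it plus the number of $2$'s strictly before the $i$'th $2$. This is exactly the shape of the formula for $a_i$ in the proof of \cref{thm:B2}.

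The key step is to express $\alpha_w,\beta_w$ in terms of $\mu_w,\nu_w$.
\begin{itemize}
\item For $i<n$, the $i$'th $1$ is followed by a $2$ if and only if some $2$ lies between the $i$'th and $(i+1)$'th $1$. This holds if and only if the $(i+1)$'th $1$ is preceded by a $2$, i.e.\ $(\mu_w)_{i+1}=\T$.
\item The $n$'th $1$ is followed by a $2$ if and only if $w$ ends in $2$. This holds if and only if the last $2$ is not followed by $1$, i.e.\ $(\nu_w)_n=\F$.
\end{itemize}
Hence $\alpha_w=(\mu_w)_2\cdots(\mu_w)_n\,\overline{(\nu_w)_n}$. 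By the symmetric argument, $\beta_w=\overline{(\mu_w)_1}\,(\nu_w)_1\cdots(\nu_w)_{n-1}$; here the first $2$ is preceded by a $1$ if and only if $w$ does not start with $2$, i.e.\ $(\mu_w)_1=\F$. Each letter of $\alpha_w,\beta_w$ is a letter of $\mu_w$ or $\nu_w$, possibly complemented. Therefore complementing $\mu_w,\nu_w$ complements $\alpha_w,\beta_w$. Since $\mu_{f_2(w)}=\bar\mu_w$ and $\nu_{f_2(w)}=\bar\nu_w$, we get $\alpha_{f_2(w)}=\bar\alpha_w$ and $\beta_{f_2(w)}=\bar\beta_w$.

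Given this, both identities follow exactly as in the proof of \cref{thm:B2}:
\[
\asc(w)+\asc(f_2(w))=n_{\T}(\alpha_w)+n_{\T}(\bar\alpha_w)=n.
\]
For the comajor index, summing the ascent-position formula over $w$ and $f_2(w)$ counts, for each $j$, the $j$'th letter of $\alpha_w$ and of $\beta_w$ exactly $n-j$ times, plus one extra for each of the $n$ ascents in total. This gives
\[
\comaj(w)+\comaj(f_2(w))=2\sum_{j=1}^n (j-1)+n=n^2.
\]
I expect the main obstacle to be the boundary cases in the identification of $\alpha_w,\beta_w$, namely the last $1$, the first $2$, and words starting with $2$ or ending in $2$. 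I would check these directly against the decomposition $w=1^{p_1}2^{q_1}\,21\cdots 1^{p_{d+1}}2^{q_{d+1}}$, using $w_1=2$ iff $p_1=0$ and $w_{2n}=2$ iff $q_{d+1}>0$. I would then sanity-check on the example of \cref{fig:B2}, where $\alpha_w=\T\F\T\F\T$ and $\alpha_{f_2(w)}=\F\T\F\T\F$.
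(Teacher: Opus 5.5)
Your proof is correct and is essentially the paper's argument. Your $\alpha_w,\beta_w$ are exactly the paper's $\underline{\mu}_w,\underline{\nu}_w$, and your identities $\alpha_w=(\mu_w)_2\cdots(\mu_w)_n\overline{(\nu_w)_n}$ and $\beta_w=\overline{(\mu_w)_1}(\nu_w)_1\cdots(\nu_w)_{n-1}$ are the paper's four-case table written uniformly, after which complementation commutes and the ascent analogue of the proof of \cref{thm:B2} finishes. One wording slip: the $1$ and $2$ of the $i$'th ascent are the letters marked by the $i$'th $\T$ of $\alpha_w$ and of $\beta_w$, not the $i$'th $1$ and $i$'th $2$ of $w$ (in $1122$ they are the second $1$ and the first $2$); your position formula already uses the correct reading.
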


 \begin{proof}
  Let $\mu = \mu_w, \nu = \nu_w, \underline{\mu} = \underline{\mu}_w$, and $\underline{\nu} = \underline{\nu}_w$. We prove the
  theorem by showing that complementing $(\mu, \nu)$ also complements $(\underline{\mu}, \underline{\nu})$. 
  Let
  \[
    w = 1^{p_1} 2^{q_1} \ 2 1 \ 1^{p_2} 2^{q_2} \ 2 1 \cdots 2 1 \ 
    1^{p_{d+1}}2^{q_{d+1}}
  \]
  so that
  \[
    \mu = \F^{p_1}\T \F^{p_2} \T \cdots \F^{p_d} \T \F^{p_{d+1}}
\quad \text{and} \quad
    \nu = \F^{q_1}\T \F^{q_2} \T \cdots \F^{q_d} \T \F^{q_{d+1}}.
  \]
  For any boolean array $\mu$ of length $n$, we write $L(\mu)$ to mean the word
  $\mu_2\dots \mu_n$ and $R(\mu)$ to mean the word $\mu_1\dots \mu_{n-1}$.
  If $p_1 > 0$ and $q_{d+1} > 0$ or equivalently if 
  $\mu_1 = \F$ and $\nu_n = \F$, we can also write 
  \begin{align*}
    w &= 1^{p_1-1} \ 12 \ 2^{q_1} 1^{p_2} \ 12 \ 2^{q_2} 1^{p_3} 
    \ 12 \ \dots 2^{q_d} 1^{p_{d+1}} \ 12 \ 2^{q_{d+1}-1},\\
    \underline{\mu} &= \F^{p_1-1} \T \F^{p_2} \T \dots \F^{p_{d+1}} \T \F^0 
    = L(\mu)\T,\\
    \underline{\nu} &= \F^0 \T \F^{q_1} \T \dots \F^{q_d} \T \F^{q_{d+1}-1} 
    = \T R(\nu).
  \end{align*}
  Considering all possible values of $\mu_1$ and $\nu_n$, we get the following
  for $\underline{\mu}$ and $\underline{\nu}$:
  \begin{itemize}
  \item $\mu_1 = \F, \nu_n = \T$:
  \begin{align*}
  \underline{\mu} =& \F^{p_1-1} \T \F^{p_2} \T \dots \T \F^{p_{d+1}+1} 
  = L(\mu)\F, \\
  \underline{\nu} =& \F^0 \T \F^{q_1} \T \dots \T \F^{q_d} 
  = \T R(\nu).  
  \end{align*}

  \item $\mu_1 = \T, \nu_n = \F$:
  \begin{align*}
  \underline{\mu} =& \F^{p_2} \T \dots \T \F^{p_{d+1}} \T \F^0 
  = L(\mu) \T, \\ 
  \underline{\nu} =& \F^{q_1+1} \T \dots \T \F^{q_{d+1} -1} 
  = \F R(\nu).
  \end{align*}

  \item $\mu_1 = \T, \nu_n = \T$:
  \begin{align*}
  \underline{\mu} =& \F^{p_2} \T \dots \T \F^{p_{d+1}+1} 
  = L(\mu) \F, \\
  \underline{\nu} =& \F^{q_1+1} \T \dots \T \F^{q_d} = \F R(\nu).  
  \end{align*}

  \end{itemize}

  Note that the four cases above partition $\bwords n 2$. It is also clear
    that the mapping is invertible. The proof is completed by seeing that the map is preserved under
    complementation. For
    instance, in the first case, $(\mu, \nu)$ maps to $(\underline{\mu},
    \underline{\nu}) = (L(\mu)\T, \T R(\nu))$. Then
    $(\overline\mu, \overline\nu)$, which matches the fourth case, 
    maps to  $(L(\overline\mu)\F, \F R(\overline\nu))$. 
    This is equal to $\overline{((L(\mu)\T)}, \overline{(\T R(\nu))})$, 
    which is just $(\overline{\underline{\mu}}, \overline{\underline{\nu}})$ 
    as required.
 \end{proof}

\begin{rem}
  For $k > 2$, the ascents and descents do not behave well jointly. For instance,
  $w = 111222333 \in \bwords 33$ is the only word having $\des(w) = 0, \maj(w)=
  0, \asc(w) =2$, and $\comaj(w) = 9$ but there are no words having $\des(w)=6,
  \maj(w)=27, \asc(w)=4$, and $\comaj(w)=18$.
\end{rem}

\subsection{Mirror-symmetric words}

A balanced word $w \in \bwords{n}{2}$ can be drawn as a lattice path in an $n
\times n$ grid that starts at $(0,0)$ and ends at $(n,n)$ taking north and
east steps. One way to do this is to
let a $1$ in $w$ be an east step in the lattice path and to let $2$ to be a
north step. 
We say that such a lattice path is \emph{mirror-symmetric} if it is invariant 
under a reflection across the NW-SE diagonal (i.e. the line connecting $(n, 0)$
to $(0, n)$). 
If $w = w_1 \dots w_\ell$, then let $\rev(w) = w_{\ell} \dots w_1$ be the \textit{reverse} of $w$. We can also define the \emph{complement} of a letter by
setting $1^c = 2$ and $2^c = 1$, and the complement of $w$ by
$w^c = w_1^c \dots w_\ell^c$.
It is not difficult to see that a word $w$ of length $2n$ is mirror-symmetric if and only if
\begin{equation}
\label{mirror-sym}
  w_{n+1}\dots w_{2n} = \rev((w_1 \dots w_n)^c).
\end{equation}
For example, both $w_{\min} = 1^n2^n$ and $w_{\max} = f_2(w) = (21)^n$ are
mirror-symmetric words. Since any choice of the first $n$ letters uniquely determines a mirror-symmetric word of length $2n$ by \eqref{mirror-sym},
it follows that there are $2^n$ mirror-symmetric words in $\bwords n2$.

\begin{thm}
\label{thm:mirror}
  Let $w \in \bwordsk{2}$ be mirror-symmetric. Then so is $f_2(w)$.
\end{thm}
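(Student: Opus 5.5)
The plan is to translate mirror-symmetry into a condition on the pair $\phi(w)=(\mu_w,\nu_w)$. I expect the condition to be preserved by complementing both arrays, which is exactly how $f_2$ acts. Let $\sigma(w)=\rev(w^c)$. Condition \eqref{mirror-sym} says precisely that $w=\sigma(w)$, since the first half of $\sigma(w)$ is the reverse-complement of the second half of $w$, and conversely. So it suffices to prove the identity
\[
\phi(\sigma(w)) = (\rev(\nu_w), \rev(\mu_w)) \qquad \text{for all } w\in\bwords n2 .
\]
Here $\rev$ of a boolean array means reading it backwards.

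To prove this identity I would work letter by letter with the definitions of $\mu$ and $\nu$. The letters of $\sigma(w)$ are $\sigma(w)_j=(w_{2n+1-j})^c$. Hence the $i$'th $1$ of $\sigma(w)$ comes from the $(n+1-i)$'th $2$ of $w$, because complementing turns $2$'s into $1$'s and reversing reverses their order. This $1$ is immediately preceded by a $2$ in $\sigma(w)$ exactly when the corresponding $2$ in $w$ is immediately followed by a $1$. Therefore $(\mu_{\sigma(w)})_i=(\nu_w)_{n+1-i}$, that is, $\mu_{\sigma(w)}=\rev(\nu_w)$. The symmetric argument gives $\nu_{\sigma(w)}=\rev(\mu_w)$. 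One can also check this with the block decomposition: $w=1^{p_1}2^{q_1}\,21\cdots 21\,1^{p_{d+1}}2^{q_{d+1}}$ becomes $\sigma(w)=1^{q_{d+1}}2^{p_{d+1}}\,21\cdots21\,1^{q_1}2^{p_1}$, since each factor $21$ is fixed by $\sigma$. This matches \eqref{def mu} and \eqref{def nu}.

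Since $\phi$ is a bijection, $w$ is mirror-symmetric if and only if $\mu_w=\rev(\nu_w)$; the companion equation $\nu_w=\rev(\mu_w)$ is then automatic. Complementing a boolean array commutes with reversing it. So if $\mu_w=\rev(\nu_w)$, then $\bar\mu_w=\rev(\bar\nu_w)$. By definition, $f_2(w)=\phi^{-1}(\bar\mu_w,\bar\nu_w)$ has $\mu_{f_2(w)}=\bar\mu_w$ and $\nu_{f_2(w)}=\bar\nu_w$. Hence $f_2(w)$ satisfies the same criterion and is mirror-symmetric.

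The only step needing care is the identity $\phi\circ\sigma=(\rev\times\rev)\circ\mathrm{swap}\circ\phi$. In particular, one must check that the indexing of the $i$'th $1$ versus the $(n+1-i)$'th $2$ is right, and that "preceded by" correctly turns into "followed by" under reversal. Once this identity holds, the rest is formal.
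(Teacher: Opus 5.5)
Your proof is correct. It reaches the same criterion as the paper ($w$ is mirror-symmetric if and only if $\mu_w=\rev(\nu_w)$) and finishes the same way (complementation commutes with reversal), but you establish the criterion by a different route.

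The paper writes $w=h\rev(h)^c$ and splits into cases according to the parity of $\des(w)$, that is, whether a factor $21$ straddles the midpoint. It then reads off $\mu_w$ and $\nu_w$ from the block decomposition of the half-word $h$. The direction actually applied to $f_2(w)$, namely that $\mu=\rev(\nu)$ forces mirror-symmetry, is left implicit there.

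You instead prove the identity $\phi(\sigma(w))=(\rev(\nu_w),\rev(\mu_w))$ for every $w\in\bwords{n}{2}$, with no parity split. Both directions of the criterion then follow from injectivity of $\phi$.

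Your identity also gives a stronger statement at no cost. Both $\phi(f_2(\sigma(w)))$ and $\phi(\sigma(f_2(w)))$ equal $(\rev(\bar\nu_w),\rev(\bar\mu_w))$, so $f_2$ commutes with the reverse-complement involution $\sigma$ on all of $\bwords{n}{2}$. The theorem is then just the statement that $f_2$ permutes the fixed points of $\sigma$.

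In exchange, the paper's computation gives an explicit description of the arrays $\mu_w,\nu_w$ of a mirror-symmetric word in terms of its half-word $h$.
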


\begin{proof}
  Let $w$ a mirror-symmetric word in $\bwords{n}{2}$. 
The idea is to consider the structure of $\mu_w$ and $\nu_w$.
 Write $w = h\rev(h)^c$ for some word $h$. We
  consider two cases. If $\des(w) = 2d$, then
  \[
    h = 1^{p_1}2^{q_1} \ 21 \ \cdots 21 \ 1^{p_{d+1}} 2^{0},
  \]
  and so
  \[
    \rev(h)^c = 1^{0}2^{p_{d+1}} \ 21 \ \cdots 21 \ 1^{q_1}2^{p_1}.
  \]
  Thus, $w$ is mirror-symmetric if and only if
  \begin{align*}
    \mu_w &= \F^{p_1}\T \dots \T \F^{p_{d+1}} \T \F^{q_d} \T \dots \T \F^{q_1}\\
    \nu_w &= \F^{q_1}\T \dots \T \F^{p_{d+1}} \T \F^{p_d} \T \dots \T \F^{p_1},
  \end{align*}
  implying $\mu_w = \rev(\nu_w)$. Now suppose $\des(w) = 2d+1$. Then,
  \[
    h = 1^{p_1}2^{q_1}(21) \dots (21)1^{p_d}2^{q_d}2
  \]
  and
  \[
    \rev(h)^c = 1 1^{q_d}2^{p_d}(21)\dots(21)1^{q_1}2^{p_1}.
  \]
  Thus, again $w$ is mirror-symmetric if and only if
  \begin{align*}
    \mu_w &= \F^{p_1}\T \dots \T \F^{p_d} \T \F^{q_d} \T \dots \T \F^{q_1}\\
    \nu_w &= \F^{q_1}\T \dots \T \F^{q_d} \T \F^{p_d} \T \dots \T \F^{p_1},
  \end{align*}
  again implying $\mu_w = \rev(\nu_w)$.
  Therefore, in either case, 
  $\mu_{f_2(w)} = \bar{\mu}_w = \overline{\rev(\nu_w)} = \rev(\nu_{f_2(w)})$,
  and hence $f_2(w)$ is also  mirror-symmetric .
\end{proof}

\begin{figure}[h!]
  \centering
  \begin{tikzpicture}[scale=0.5]
    \draw[help lines] (0,0) grid +(10,10);
    \draw[dashed] (0,10) -- (10,0);
    \draw[line width=1pt] (0,0) -- (1,0) -- (1,1) -- (1,2) -- (1,3) -- (1,4) -- (2,4) -- (2,5) -- (2,6) -- (2,7) -- (3,7) -- (3,8) -- (4,8) -- (5,8) -- (6,8) -- (6,9) -- (7,9) -- (8,9) -- (9,9) -- (10,9) -- (10,10);
    \draw[line width=1pt,color=blue] (0,0) -- (0,1) -- (1,1) -- (2,1) -- (3,1) -- (4,1) -- (4,2) -- (5,2) -- (5,3) -- (6,3) -- (7,3) -- (7,4) -- (7,5) -- (8,5) -- (8,6) -- (9,6) -- (9,7) -- (9,8) -- (9,9) -- (9,10) -- (10,10);
  \end{tikzpicture}
  \caption{The black lattice path corresponds to the mirror-symmetric word 
  $w$ in \cref{eg:mirror}. The blue path corresponds to 
  $f_2(w)$ and is also clearly mirror-symmetric.}
  \label{fig:mirror}
\end{figure}

\begin{example}
\label{eg:mirror}
The word $w = 12222122212111211112$ is mirror-symmetric as seen in \cref{fig:mirror}. 
One can verify that
$f_2(w) = 21111212112212122221$ is also mirror-symmetric. 
Note that $f_2(w)$ is not obtained from $w$ by interchanging $1$'s and $2$'s.
  
\end{example}

\section{Bijection on $\bwordsk{k}$}
\label{sec:Bk_bijection}

We will construct a bijective map $f_k$ on $\bwords nk$ to prove \cref{thm:main} in this section. The map will be constructed inductively using $f_{k-1}$.

\subsection{Various maps}
Here, we will define the maps needed to construct the bijection.
We will consider the running example $w = 325544135121432 \in \bwords 35$ throughout this section to illustrate the notation and ideas. 

Recall that in \cref{sec:B2_bijection}, a word $w \in \bwords{n}{2}$ is reversibly 
encoded as a pair of boolean arrays $(\mu_w,\nu_w)$. 
For $w \in \bwords nk$, define the word $w_{-k} \in \bwords{n}{k-1}$ as the subword of $w$ with the $k$'s deleted. 
We will reversibly encode $w$ as a triplet $(w_{-k}, \mu_w, \nu_w)$ for some boolean arrays $\mu_w$ and $\nu_w$.

\begin{notn}
\label{notn:w_underline}
  For $w \in \bwords{n}{k}$, let $\underline{w}$ be the subword of $w$ 
  obtained by removing all occurrences of $k$ which do not lead to a change
  in the number of descents. 
  In particular, let $w_i$ to $w_j$ be a sequence of
  $k$'s for $1 \le i \le j \le n$. Also let $w_i=0$ for $i \le 0$ and $i >
  n$. Then, delete $w_i \dots w_j$ if one of the following is true:
  \begin{enumerate}
  \item $w_{i-1} > w_{j+1}$ and $w_{i-1}, w_{j+1} < k$, or
  \item $w_{i-1} \le w_{j+2}$, $w_{j+1}=k$, and $w_{i-1},w_{j+2} < k$.
  \end{enumerate}
\end{notn}

It is clear from the construction that the order of deletion in various blocks is irrelevant.
For example, if $w =113323112223 \in \bwords 43$, we would remove the $3$ at positions $6$ and $12$ due to $(1)$, and the $3$ at position $3$ due to $(2)$. 
So, $\underline{w} = 113211222$. 

By construction,
$\des(\underline{w}) = \des(w)$.
Define the set of \textit{non-descents}
of a (not necessarily balanced) word $\ndesset(w)$ in $w$
as
\begin{equation}
\label{ndesset}
 \ndesset(w) = \{ 0 \} \cup \{ 1 \le i \le \ell(w)-1 \mid w_i \le w_{i+1}\}.
\end{equation}
The size of $\ndesset(w)$ is $|\ndesset(w)| = \ell(w) - \des(w)$.
For our running example, we obtain $\underline{w} = 3254413121432$ and
$\ndesset(w_{-5}) = \{0, 2, 3, 5, 7, 9\}$.

\begin{lem}\label{lem:index_set_I}
  For $w \in \bwords{n}{k}$, suppose the positions of $k$ in $\underline{w}$ are
  $a_1 < \dots < a_r$.
  Then $\{a_i - i \mid 1 \le i \le r\} \subseteq \ndesset(w_{-k})$.
\end{lem}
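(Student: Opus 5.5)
The plan is to read $a_i - i$ as a position in $w_{-k}$. Let $\underline{w}$ have its $k$'s at positions $a_1<\dots<a_r$. Exactly $i-1$ of the letters before position $a_i$ are $k$'s, so $p_i := a_i - i$ is the number of non-$k$ letters preceding the $i$'th surviving $k$. Deletions only remove $k$'s, so $\underline{w}_{-k} = w_{-k}$. Hence $p_i$ is the index, in $w_{-k}$, of the last non-$k$ letter before that $k$, with $p_i=0$ if there is none. It therefore suffices to show, for each surviving $k$, that either $p_i = 0$ or $1 \le p_i \le \ell(w_{-k})-1$ with $(w_{-k})_{p_i} \le (w_{-k})_{p_i+1}$. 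By \eqref{ndesset} this says $p_i \in \ndesset(w_{-k})$.

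The first step is to analyse the deletion rule of \cref{notn:w_underline} on a single maximal block $w_s\cdots w_t$ of $k$'s in $w$. Let $x = w_{s-1}$ and $y = w_{t+1}$, where $0$ stands for a missing letter. By maximality, $x,y<k$.

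Both conditions (1) and (2) need the letter just before the run being deleted to be $<k$. So any deleted run $w_i\cdots w_j$ must start at $i = s$, and it suffices to look at runs beginning at the start of a block:
\begin{itemize}
\item If $x > y$, condition (1) with $i=s$, $j=t$ deletes the whole block.
\item If $x \le y$ and the block has length at least $2$, condition (1) does not apply to the whole block. Condition (2) applies exactly with $i=s$, $j=t-1$, using $w_{j+1}=k$ and $w_{j+2}=y$. It deletes all but the last $k$.
\item If $x\le y$ and the block has length $1$, neither condition applies, so the $k$ survives.
\end{itemize}
I would also check that deleting part of a block creates no new opportunity for deletion. The surviving $k$ is followed by the non-$k$ letter $y$, so (2) cannot apply, and (1) fails because $x\le y$. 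The paper already remarks that the order of deletion is irrelevant.

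The conclusion of this step is that each maximal block contributes at most one $k$ to $\underline{w}$, and it contributes one exactly when $x\le y$. The endpoints need separate care, and I expect this to be the main obstacle, since the convention $w_i = 0$ must be handled correctly there:
\begin{itemize}
\item A block at the very start has $x=0$. Condition (1) would need $0>y$, which is false, so it survives after reduction, and its $p_i=0\in\ndesset(w_{-k})$.
\item A block at the very end has $y = 0$. Since $k\ge 2$, there are non-$k$ letters in $w$, so $x\ge 1 > 0 = y$ and the block is deleted. This ensures $p_i \le \ell(w_{-k})-1$.
\item A block that is both first and last cannot occur, since $k\ge 2$.
\end{itemize}

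Finally, for a surviving $k$ from an interior block, the letters $x$ and $y$ are consecutive in $w_{-k}$, at indices $p_i$ and $p_i+1$, and $x \le y$. Hence $p_i\in\ndesset(w_{-k})$, which gives $\{a_i - i\} \subseteq \ndesset(w_{-k})$. As a byproduct, distinct surviving $k$'s come from distinct blocks and so give distinct $p_i$, which is presumably what the later construction needs.
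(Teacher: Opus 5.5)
Your proof is correct and follows the same route as the paper. Both read $a_i - i$ as the index in $w_{-k}$ of the letter just before the $i$'th surviving $k$, observe that surviving $k$'s are isolated, and use the deletion rule to conclude $x \le y$; the paper phrases this last step as ``removing a surviving $k$ lowers the descent count by one.'' Your block-by-block case analysis, including the endpoint conventions and the distinctness of the $a_i - i$, makes explicit what the paper leaves to the reader.
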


\begin{proof}
  Note that $a_i - i$ is the position, in $w_{-k}$, of the letter preceding the
  $i$'th $k$ in $\underline{w}$. The definition of $\underline{w}$ ensures that
  it has no consecutive $k$'s. By \cref{notn:w_underline}, the removal
  of any $k$ that appears in $\underline{w}$ must necessarily decrease
  descents by $1$. Therefore, the positions $a_i - i$ in $w_{-k}$ are
  non-descents.
\end{proof}

Let $\bwords{n}{k}^d = \{ w \in \bwords{n}{k} \mid \des(w) = d\}$ be the set of
balanced words having $d$ descents.

\begin{notn}\label{notn:general_mu}
  For $w \in \bwords{n}{k}^d$, let $a_1 < \dots < a_r$ be the
  positions of $k$ in $\underline{w}$ and 
  $\ndesset(w_{-k}) = \{b_1 < \dots < b_s\}$. 
  Denote by $\mu_w \in B_{(k-1)n - d + r}^r$ the boolean array given by
  \[
    (\mu_w)_j =
    \begin{cases}
      \T &\text{if } b_j \in \{a_i - i \mid 1 \le i \le r\} ,\\
      \F &\text{otherwise}.
    \end{cases}
  \]
  Note that $\ell(\mu_w) = |\ndesset(w_{-k})| = (k-1)n - d + r$.
\end{notn}

In our running example, $w \in \bwords 3{5}^7$, we have $r = 1, a_1 = 3$ and so $\mu_w = \F\T\F\F\F\F \in B^1_6$. 

For any subset $W
\subseteq \bwords{n}{k}$, let $\underline{W} = \{ \underline{w} \mid w \in W \}$.
Also, let $\bwords{n}{k}^{d_1,d_2}$ be the subset of $\bwords{n}{k}^{d}$ such that $d_1 = \des(w_{-k})$ for $w \in \bwords{n}{k}^{d}$ and $d_2 = d - d_1$.
Then, the set $\bwords{n}{k}$ is partitioned as
\[
\bwords{n}{k} = \bigsqcup_{\substack{0 \le d_1 \le (k-2)n \\ 0 \le d_2 \le n}}
\bwords{n}{k}^{d_1,d_2}.
\]

\begin{notn}\label{notn:map_g}
For $u \in \bwords{n}{k-1}^{d}$, write  
$\ndesset(u) = \{b_1 < \dots < b_{n(k-1)-d}\}$.
  Let $g : \bwords{n}{k-1}^{d_1} \times B_{(k-1)n-d_1}^{d_2} \rightarrow
  \ubwords{n}{k}^{d_1,d_2}$ be given by $g(u, \mu) = v$ where $v$ is
constructed from $u$ by placing one $k$ after every $u_{b_i}$ whenever $\mu_i = \T$.
\end{notn}

Let $u = 324413121432 \in \bwords 3{4}^6$ and $\mu = \F\T\F\F\F\F$.
Then $v = 3254413121 \allowbreak 432$ which is exactly $\underline{w}$ from our running example.
It should be clear from \cref{lem:index_set_I} that the following proposition
is true.

\begin{prop}
The map $g$ defined in \cref{notn:map_g} is a bijection. In particular,
given $u \in \ubwords{n}{k}^{d_1,d_2}$, $g^{-1}(u) = (u_{-k}, \mu_u)$. 
\end{prop}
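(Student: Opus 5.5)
The plan is to show that $g$ is well defined, i.e.\ lands in $\ubwords{n}{k}^{d_1,d_2}$, and that $h(u)=(u_{-k},\mu_u)$ is a two-sided inverse on that set. Both directions rest on one fact. Inserting a single $k$ right after a non-descent position $b$ of a word with no $k$'s, and not adjacent to another inserted $k$, increases $\des$ by exactly one. Indeed, if $x\le y$ are the neighbours (with the boundary conventions $w_0=w_{\ell+1}=0$), then $x\,k\,y$ has the single descent $k>y$, since $y<k$, and no descent at $x\le k$. Inserting after a descent position creates no new descent.

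\textbf{Step 1: $g$ lands in the right set.} Let $(u,\mu)$ be in the domain and $v=g(u,\mu)$. Since $u\in\bwords{n}{k-1}$ and we insert $d_2=n_{\T}(\mu)$ letters $k$ at distinct positions $b_i$, there are no consecutive $k$'s. The fact above then gives $\des(v)=d_1+d_2$ and $v_{-k}=u$. Here I am reading the target as the balanced image of such words, with the count of $k$'s given by $d_2$ rather than $n$; this is what $\ubwords{n}{k}^{d_1,d_2}$ must mean, namely $\{\underline{w}\}$.

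We next check that $v=\underline{v}$ and that $v$ actually arises as some $\underline{w}$. Each isolated $k$ in $v$ sits between $x\le y$ with $x,y<k$. Condition (1) of \cref{notn:w_underline} fails because $x\not>y$, and condition (2) fails because $w_{j+1}\neq k$. So no $k$ is deleted, and $\underline{v}=v$. To realise $v$ as $\underline{w}$ for some $w\in\bwords{n}{k}$, we add the remaining $n-d_2$ copies of $k$ in places that are deleted by the rule and do not change the descent count. The natural choice is to append them at the end: a trailing block $k\cdots k$ after last letter $y$ sits between $y$ and $w_{\ell+1}=0$, so $y>0$ and condition (1) applies. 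We must also check that this block does not interfere with an inserted $k$ at the very end. In that case we merge it into the existing trailing block and verify the deletion using the boundary convention. This boundary bookkeeping is the step I expect to be the main obstacle, since \cref{notn:w_underline} must delete exactly the extra $k$'s; I would handle it by a short case check on the final letters.

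\textbf{Step 2: $h\circ g=\mathrm{id}$.} By Step 1, $v_{-k}=u$. The $i$'th $k$ of $v$ sits at position $a_i$ with $a_i-i=b_{j}$, where $j$ is the index of the $i$'th $\T$ of $\mu$. Hence by \cref{notn:general_mu}, applied to $v$ itself (legitimate since $\underline{v}=v$), we get $\mu_v=\mu$.

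\textbf{Step 3: $g\circ h=\mathrm{id}$.} Let $v\in\ubwords{n}{k}^{d_1,d_2}$. By \cref{lem:index_set_I}, the positions $a_i-i$ lie in $\ndesset(v_{-k})$, so $\mu_v$ is a well-defined element of $B^{r}_{(k-1)n-d_1}$. The number of $k$'s is $r=d_2$, because $\des(v)-\des(v_{-k})=r$ by the insertion fact. Reinserting a $k$ after each marked $b_j$ reproduces $v$ letter by letter. Therefore $g$ is a bijection with $g^{-1}(v)=(v_{-k},\mu_v)$.
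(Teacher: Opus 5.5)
Your argument is correct and follows the paper's route. The paper only says the proposition is clear from \cref{lem:index_set_I}; your Steps 2--3 supply exactly the details behind that claim, namely the mutual inverses via $a_i-i\in\ndesset(v_{-k})$ and the count of one descent per inserted $k$. Your Step 1, with the explicit preimage $w=v\,k^{n-d_2}$, additionally checks that $g$ lands in $\ubwords{n}{k}^{d_1,d_2}$.

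The boundary case you flag as the main obstacle cannot occur. Since $\ndesset(u)\subseteq\{0,\dots,\ell(u)-1\}$, no $k$ is ever inserted after the last letter of $u$, so $v$ ends in a letter smaller than $k$ and the appended block is deleted by condition (1) alone. Your proposed ``merge'' fix would in fact have failed, because condition (1) would then delete the inserted $k$ as well.
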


Next, we define the \textit{modified descent set} $\desset_+(w)$ for any word $w$ 
as $\desset_+(w) = \desset(w) \cup \{ \ell(w) \}$, whose size is $|\desset_+(w)| = \des(w) + 1$.

\begin{notn}
\label{notn:general_nu}
  Let $w \in \bwords{n}{k}^{d_1,d_2}$ with $d=d_1+d_2$, and let $\desset_+(\underline{w}) = \{j_1, \dots,
  j_{d+1}\}$. We can write $w$ in terms of $\underline{w}$ as
  \[
    w = \underline{w}_1 \dots \underline{w}_{j_1}k^{p_1} \underline{w}_{j_1+1}    \dots \underline{w}_{j_2}k^{p_2} \underline{w}_{j_2+1} \dots \underline{w}_{kn-d_2}k^{p_{d+1}}
  \]
  for a unique assignment of integers $p_1, \dots, p_{d+1} \ge 0$ that add up to $n-d_2$. Now,
  let $\nu_w \in B_{n+d_1}^{n-d_2}$ be
  \[
    \nu_w = \T^{p_1}\F \T^{p_2} \dots \F\T^{p_d} \F \T^{p_{d+1}}.
  \]
\end{notn}

From our running example, we see that $\desset_+(\underline{w}) =
\{1,3,5,7,9,11,12,13\}$ and $\nu_w = \F\T\F\F\T\F\F\F\F$.

\begin{notn}\label{notn:map_h}
Write $\nu \in B_{n+d_1}^{n-d_2}$ as $\nu = \T^{p_1}\F
  \T^{p_2} \dots \T^{p_d} \F \T^{p_{d+1}}$.
  Then define $h : \ubwords{n}{k}^{d_1,d_2} \times B_{n+d_1}^{n-d_2} \rightarrow
  \bwords{n}{k}^{d_1,d_2}$ by $h(u, \nu) = v$, where $v$ is constructed
  from $u$ by placing $p_j$ $k$'s in $u$ immediately after each position $j \in
  \desset_+(u)$.
\end{notn}

Applying $h(3254413121432,\F\T\F\F\T\F\F\F\F)$ gives back our running example 
$w=325544135121432$.

It should be clear from \cref{notn:map_h} that the following result is true.

\begin{prop}
The map $h$ defined in \cref{notn:map_h} is a bijection. In particular,
given $u \in \bwords{n}{k}^{d_1,d_2}$,
$h^{-1}(u) = (\underline{u}, \nu_u)$.
\end{prop}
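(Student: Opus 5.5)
The plan is to show that $h$ and the map $u \mapsto (\underline{u}, \nu_u)$ are mutually inverse. That requires three checks: $h$ lands in $\bwords{n}{k}^{d_1,d_2}$; the inverse candidate lands in $\ubwords{n}{k}^{d_1,d_2} \times B_{n+d_1}^{n-d_2}$; and the two composites are identities.

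\textbf{Step 1: $h$ is well defined.} Let $u \in \ubwords{n}{k}^{d_1,d_2}$ and $\nu \in B_{n+d_1}^{n-d_2}$, and set $d = d_1 + d_2$. Since $u = \underline{w}$ for some $w$, the $k$'s that $u$ itself contains are exactly the ones kept in \cref{notn:w_underline}. Deleting all $k$'s from $w$ leaves $d_1$ descents, so $u_{-k}$ has $d_1$ descents, and $\des(u) = d$. Hence $|\desset_+(u)| = d+1$, which matches the $d+1$ blocks $\T^{p_j}$ of $\nu$. Moreover $u$ contains $n - (n - d_2) = d_2$ copies of $k$, so inserting $\sum p_j = n_{\T}(\nu) = n - d_2$ further $k$'s produces a balanced word $v$.

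We next check that the inserted $k$'s create no new descents. A block $k^{p}$ placed immediately after a descent position $j$ sits between $u_j > u_{j+1}$. The descent $u_j u_{j+1}$ is replaced by the non-descent $u_j \le k$ followed by the single descent $k > u_{j+1}$; the case $u_{j+1} = k$ cannot occur because $u_j > u_{j+1}$. A block placed at the end, after position $\ell(u)$, adds only non-descents. Thus $\des(v) = d$. Also $v_{-k} = u_{-k}$, so $\des(v_{-k}) = d_1$, and $v \in \bwords{n}{k}^{d_1,d_2}$.

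\textbf{Step 2: the main obstacle, $\underline{h(u,\nu)} = u$.} We must show that the deletion rule of \cref{notn:w_underline}, applied to $v$, removes exactly the inserted $k$'s and none of the $k$'s already in $u$. This is the delicate step, because inserted $k$'s may merge with existing $k$'s of $u$ into a single maximal run, and the rule acts on maximal runs.

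The case analysis goes by the neighbours of an insertion point $j \in \desset_+(u)$.
\begin{enumerate}
\item If $u_j < k$ and $u_{j+1} < k$, the inserted run is isolated and rule (1) deletes it.
\item If $u_j = k$, then $u_j$ was a surviving $k$ of $u$. The inserted $k$'s attach to it, and the whole run must still satisfy the keep condition. Here I would use the fact that $u$ has no consecutive $k$'s (proof of \cref{lem:index_set_I}) together with the characterization ``a $k$ is deleted iff its removal does not change $\des$''.
\end{enumerate}
The cleanest route is probably to reformulate $\underline{w}$ intrinsically. Inside each maximal run of $k$'s, keep exactly one $k$ precisely when removing the run lowers $\des$; otherwise delete the whole run. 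Under this reformulation, a run that ends at a descent of $u$, or at the end of the word, contributes its extra $k$'s without changing that count. I would first prove this reformulation as a small lemma, then verify it on the cases $u_j = k$ and $u_j < k$. The end-of-word case $j = \ell(u)$, where $w_{j+1} = 0$, must also be checked.

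\textbf{Step 3: $\nu_{h(u,\nu)} = \nu$ and $h(\underline{w}, \nu_w) = w$.} Once $\underline{v} = u$ is known, the decomposition in \cref{notn:general_nu} of $v$ with respect to $\underline{v} = u$ recovers exactly the block sizes $p_j$, so $\nu_v = \nu$. This uses the uniqueness of that decomposition, which requires every deleted $k$ to sit immediately after a position of $\desset_+(\underline{w})$.

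Conversely, for $w \in \bwords{n}{k}^{d_1,d_2}$ that same fact gives $h(\underline{w}, \nu_w) = w$ by construction. To justify it, note that a deleted run under rule (1) follows $w_{i-1} > w_{j+1}$, which is a descent of $\underline{w}$. Under rule (2), the run is followed by a kept $k$ and then a smaller letter, so it can be regrouped as sitting after that kept $k$, which is a descent. Regrouping in this way yields the unique decomposition. Therefore $h$ and $u \mapsto (\underline{u}, \nu_u)$ are inverse bijections.
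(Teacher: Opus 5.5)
Your proposal is correct and follows the approach the paper has in mind. The paper gives no argument beyond saying the result is clear from \cref{notn:map_h}, i.e.\ that $h$ and $w \mapsto (\underline{w}, \nu_w)$ are mutually inverse by construction. Your three steps supply exactly that verification: $h$ is well defined, the run-level reformulation of \cref{notn:w_underline} gives $\underline{h(u,\nu)} = u$, and $\nu$ is recovered from the uniqueness of the decomposition in \cref{notn:general_nu}.

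Two points are left slightly implicit. First, $u$ never ends in $k$, because a terminal run is always deleted by rule (1); this settles the end-of-word case. Second, no $k$'s are inserted immediately before a kept $k$, since the preceding position is a non-descent. Hence the run containing a kept $k$ in $h(u,\nu)$ is exactly $k^{p_j+1}$, with the same neighbours as in $u$.
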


\begin{notn}
\label{notn:map_gamma}
For $k \ge 3$, $n \ge 1$, $0 \le d_1 \le (k-2)n$, and
$0 \le d_2 \le n$, let
\[
R_{n,k}^{d_1,d_2} = \bwords{n}{k-1}^{d_1} \times B_{(k-1)n - d_1}^{d_2} \times B_{n+d_1}^{n-d_2} \quad
\text{and} \quad
R_{n,k} = \bigsqcup_{\substack{0 \le d_1 \le (k-2)n \\ 0 \le d_2 \le n}}
R_{n,k}^{d_1,d_2}.
\]
Now let $\Gamma_k : \bwords nk \rightarrow R_{n,k}$ be defined by $\Gamma_k(w) = (w_{-k}, \mu_w, \nu_w)$.
\end{notn}

\begin{lem}
\label{lem:Gamma}
  The function $\Gamma_k$ is a bijection.
\end{lem}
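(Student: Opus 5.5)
The plan is to write $\Gamma_k$ as a composite of the two bijections $g$ and $h$ already established, sliced according to the partition of $\bwords nk$ into the pieces $\bwords nk^{d_1,d_2}$. For $w \in \bwords nk^{d_1,d_2}$, the bijectivity of $h$ gives $h^{-1}(w) = (\underline{w}, \nu_w)$ with $\underline{w} \in \ubwords nk^{d_1,d_2}$ and $\nu_w \in B_{n+d_1}^{n-d_2}$. The bijectivity of $g$ then gives $g^{-1}(\underline{w}) = (\underline{w}_{-k}, \mu_{\underline{w}}) \in \bwords n{k-1}^{d_1} \times B_{(k-1)n-d_1}^{d_2}$. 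So the candidate inverse of $\Gamma_k$ restricted to $R^{d_1,d_2}_{n,k}$ is $(u,\mu,\nu) \mapsto h(g(u,\mu),\nu)$. It remains to check that this composite agrees with $\Gamma_k$ and that the indices match.

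The first step is to identify the components. Since $\underline{w}$ is obtained from $w$ by deleting only letters equal to $k$, we have $\underline{w}_{-k} = w_{-k}$. Also, by \cref{notn:general_mu}, $\mu_w$ is defined purely in terms of the positions of $k$ in $\underline{w}$ and $\ndesset(w_{-k})$, so $\mu_{\underline{w}} = \mu_w$; this uses $\underline{\underline{w}} = \underline{w}$, which holds because no further $k$ in $\underline{w}$ satisfies the deletion criteria. Hence $\Gamma_k(w) = (g^{-1}\times \mathrm{id})(h^{-1}(w))$. The second step is bookkeeping on parameters. We have $\des(w_{-k}) = d_1$ by definition of $\bwords nk^{d_1,d_2}$. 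The number $r$ of $k$'s in $\underline{w}$ equals $d_2$, because $\des(\underline{w}) = d$ and each surviving $k$ adds exactly one descent over $w_{-k}$ (this is the content behind \cref{lem:index_set_I}). This gives the length $(k-1)n - d_1$ and weight $d_2$ for $\mu_w$. For $\nu_w$, the length is $d+1+(n-d_2)-1 = n+d_1$ and the weight is $n-d_2$, as required.

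Since $g$ and $h$ are bijections between the indicated sets for every $(d_1,d_2)$, the map $\Gamma_k$ restricts to a bijection $\bwords nk^{d_1,d_2}\to R^{d_1,d_2}_{n,k}$. Taking the disjoint union over $(d_1,d_2)$ gives the lemma.

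The main obstacle is the subtle point hidden in the Propositions: that $g$ actually lands in $\ubwords nk^{d_1,d_2}$. Concretely, inserting a $k$ after a non-descent position of $u$ must create a $k$ that would not be deleted by \cref{notn:w_underline}, and must raise $\des$ by exactly one. Dually, the pieces $\underline{W}$ are exactly the images of $g$. I would verify this by a local case check of the neighbours $w_{i-1}, w_{j+1}$ (and $w_{j+2}$) around a single inserted $k$, using that insertion after a non-descent $x \le y$ turns $xy$ into $xky$ with one new descent. I would also treat the boundary conventions $w_0 = 0$ and the end position separately.
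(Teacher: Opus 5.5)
Your proof is correct and follows the paper's route. The paper also checks that $\Gamma_k(w)$ lands in $R^{d_1,d_2}_{n,k}$ and that $h(g(w_{-k},\mu_w),\nu_w)=w$, so that on each slice $\Gamma_k$ is inverse to $h\circ(g\times\mathrm{id})$. You go further by making explicit the bookkeeping ($\underline{\underline{w}}=\underline{w}$, $r=d_2$, and the lengths and weights of $\mu_w$ and $\nu_w$) and the surjectivity direction, both of which the paper leaves implicit in the two Propositions on $g$ and $h$.
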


\begin{proof}
Suppose $\Gamma_k(w) = (w_{-k}, \mu_w, \nu_w)$. 
Then, if $w \in \bwords n{k}^d$, 
let $d_1 = \des(w_{-k})$ and $d_2 = d - d_1$. 
Then $\mu_w \in  B_{(k-1)n - d_1}^{d_2}$, and so $g(w_{-k},\mu_w)$ is well-defined and lives in $\ubwords{n}{k}^{d_1,d_2}$. 
Moreover, $\nu_w \in  B_{n + d_1}^{n - d_2}$ and therefore, $h(g(w_{-k},\mu_w), \nu_w)$ is well-defined and belongs to $\bwords{n}{k}^{d}$. One then verifies that $w = h(g(w_{-k},\mu_w), \nu_w)$.
\end{proof}

\cref{lem:Gamma} gives another proof of the identity.
\[
  |\bwords nk| = \sum_{d_1 = 0}^{(k-2)n} |\bwords n{k-1}^{d_1}| 
  \sum_{d_2 = 0}^{n} \binom{(k-1)n - d_1}{d_2} \binom{n+d_1}{n-d_2}.
\]
A more direct way to see this is to use the Chu--Vandermonde identity to conclude that the inner sum gives $\binom{kn}{n}$, which does not depend on $d_1$. Then, using \eqref{bwords size} proves the result.

\subsection{Proof of the main result}

\begin{figure}[h!]
  \centering
  \begin{tikzpicture}[node distance=4cm]
    \node (start)                   {$w_{-k}$};
    \node (mid)    [right of=start] {$\underline{w}$};
    \node (end)    [right of=mid]   {$w$};
    \node (start2) [below of=start] {$w'_{-k}$};
    \node (mid2)   [below of=mid]   {$\underline{w}'$};
    \node (end2)   [below of=end]   {$w'$};

    \draw [<->] (start) -- (mid) node [above,text centered,midway] {$g(w_{-k},\mu_w)$};
    \draw [<->] (mid) -- (end) node [above,text centered,midway] {$h(\underline{w}, \nu_w)$};
    \draw [<->] (start) -- (start2) node [left,text centered,midway] {$\bij{k-1}(w_{-k})$};
    \draw [<->] (end) -- (end2) node [right,text centered,midway] {$\bij{k}(w)$};
    \draw [<->] (start2) -- (mid2) node [below,text centered,midway] {$g(w'_{-k}, \mu_{w'} = \nu_{w})$};
    \draw [<->] (mid2) -- (end2) node [below,text centered,midway] {$h(\underline{w}',\nu_{w'} = \mu_w)$};
  \end{tikzpicture}
  \caption{Schematic for the construction of $\bij{k}$ for $k \ge 3$. 
  In particular, $f_k(w) = \Gamma_k^{-1}((f_{k-1}(w_{-k}), \nu_w, \mu_w))$.}
  \label{fig:schematic}
\end{figure}

The idea of the proof is given in \cref{fig:schematic}. 
Starting with $w \in \bwords nk$, we obtain $\underline{w}$ and $w_{-k}$ as well as $\mu_w$ and $\nu_w$. We then apply $f_{k-1}$ to get $w'_{-k}$. Finally, we use the $g$ and $h$ maps along with $\mu_{w'} = \nu_{w}$ and $\nu_{w'} = \mu_w$ to construct the word $w' \in \bwords nk$.
Formally, define the map $f_k : \bwords nk \rightarrow \bwords nk$ for $k \ge 3$ by
\[
  f_k(w) = \Gamma_k^{-1}((f_{k-1}(w_{-k}), \nu_w, \mu_w)).
\]
The base case of this composite map is the function $f_2$ defined in \cref{sec:B2_bijection}.
This is the main result.

\begin{thm}\label{thm:Bk_major_index}
  For $k \ge 3$, $f_k$ satisfies 
  $\des(w) + \des(f_k(w)) = (k-1)n$ and
  $\maj(w) + \maj(f_k(w)) = n^2\binom{k}{2}$ for all $w \in \bwordsk{k}$.
\end{thm}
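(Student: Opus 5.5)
The plan is to argue by induction on $k$, with \cref{thm:B2} as the base case, and to treat the two statistics separately. Throughout, write $u = w_{-k}$, $u' = f_{k-1}(u)$ and $w' = f_k(w) = \Gamma_k^{-1}(u', \nu_w, \mu_w)$, and put $d_1 = \des(u)$, $d_2 = \des(w) - d_1$.

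\textbf{Descents.} Here only bookkeeping with the bijection $\Gamma_k$ of \cref{lem:Gamma} is needed.
By construction $d_2 = n_{\T}(\mu_w)$.
By induction, $d_1' := \des(u') = (k-2)n - d_1$.
Hence $\mu_{w'} = \nu_w$ has length $n + d_1 = (k-1)n - d_1'$, so it lies in the correct space, and it has $n - d_2$ letters $\T$. This gives $d_2' = n - d_2$.
Consistency also requires $\nu_{w'} = \mu_w$ to lie in $B_{n+d_1'}^{\,n-d_2'} = B_{(k-1)n-d_1}^{d_2}$, which holds.
Therefore $\des(w) + \des(w') = (k-2)n + n = (k-1)n$.

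\textbf{Major index: a decomposition formula.} I would first prove a lemma expressing $\maj(w)$ through $u$, $\mu_w$ and $\nu_w$, using the same double counting as in the proof of \cref{thm:B2}. The starting identity is
\[
\maj(w) = \sum_{x} \#\{\text{descents of } w \text{ at positions} \ge \pos(x)\},
\]
where $x$ runs over the letters of $w$. I would split the letters into three groups.
\begin{enumerate}
\item Letters of $u$. A letter at position $i$ of $u$ sees the descents of $u$ after it, plus the descents created by the $k$'s of $\underline{w}$ that come after it.
\item The $k$'s inserted by $g$. The $k$ attached to the $\T$ at index $j$ of $\mu_w$ sits just after the non-descent $b_j$ and starts a descent.
\item The $k$'s inserted by $h$. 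A $k$ in the block after the $m$-th $\F$ of $\nu_w$ sees exactly the descents of $\underline{w}$ after position $j_m$. This number is the number of $\F$'s after it in $\nu_w$, excluding the terminal one.
\end{enumerate}
Summed over $u$ alone, group (1) gives $\maj(u)$. Every other term should be a count of pairs (a $\T$ and an $\F$ in some relative order) inside $\mu_w$ or inside $\nu_w$, plus cross terms involving $b_j$.

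\textbf{Eliminating the descent-set dependence.} The key observation is that $b_j$ equals $(j-1)$ plus the number of descents of $u$ before $b_j$. So the cross terms between $\mu_w$ and the descent set of $u$ can be rewritten using only the index $j$, $d_1$, and sums such as $\sum_{\T\text{ at } j} b_j$.

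\textbf{Adding $w$ and $w'$.} I then add the formula for $w$ and for $w'$.
\begin{itemize}
\item The terms $\maj(u) + \maj(u')$ give $n^2\binom{k-1}{2}$ by induction.
\item The purely boolean terms pair up as in the proof of \cref{thm:B2}, because the roles of $\mu$ and $\nu$ are swapped and complemented in the relevant sense. Each index of $\nu_w$ is counted once as a $\T$-contribution in one word and once as an $\F$-contribution in the other.
\item This should leave $n^2\binom{k}{2} - n^2\binom{k-1}{2} = n^2(k-1)$, together with the residual cross terms.
\end{itemize}

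\textbf{Main obstacle.} The hard step is these residual cross terms. They involve the actual positions $b_j$ of the non-descents of $u$ singled out by $\mu_w$, and the descent positions $j_m$ of $\underline{w}$ that index $\nu_w$. Since $f_{k-1}$ only reverses $\des$ and $\maj$, and not the descent set itself, these must cancel by an aggregate identity.

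What I would try first is to rewrite each position-dependent term so that the contributions of the inserted $k$'s become pure counts on the boolean arrays. The positions $b_j$, or positions in $\underline{w}$, should appear only through the fact that they shift later descents of $u$. That shifting is counted from the side of the $k$'s ("descents after me"), not from the side of the descents, so it never refers to where the descents of $u$ actually lie. If this succeeds, the only quantities of $u$ entering the formula are $\maj(u)$, $d_1$ and $\ell(u) = (k-1)n$, and the identity closes.

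If a genuine dependence on $\desset(u)$ remains, I would look for a complementary term in $w'$. Its contributions are indexed by $\ndesset(u')$ and $\desset_+(\underline{w}')$, whose index sets have swapped lengths $n+d_1$ and $(k-1)n-d_1$. I would then use the identity $\sum_{b\in\ndesset(v)} b = \binom{\ell(v)}{2} - \maj(v)$ to convert such sums into statistics controlled by the induction hypothesis.
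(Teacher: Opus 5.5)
Your descent argument is correct and matches the paper's. For the major index your architecture also matches the paper's: you split into the contributions of $u$, of the $k$'s placed by $g$, and of the $k$'s placed by $h$, and pair the $\mu_w$-terms of $w$ with the $\nu_{w'}$-terms of $w'$. But the proposal stops at the step that constitutes the proof. You never establish
\[
\maj(w)-\maj(w_{-k})+\maj(f_k(w))-\maj(f_k(w)_{-k})=(k-1)n^2,
\]
and leave the position-dependent terms as an unresolved ``main obstacle'' with conditional plans.

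The fallback you describe could not close it. A leftover such as $\sum_i b_{\pos_{\mu_w}(i)}$ is a sum over a $\mu_w$-selected subset of $\ndesset(u)$, so the identity $\sum_{b\in\ndesset(v)}b=\binom{\ell(v)}{2}-\maj(v)$ does not apply. Moreover, the insertion sites in $w'$ are chosen from $\ndesset(u')$ by $\nu_w$, and the induction hypothesis gives no control over individual positions in $u'$. So the cancellation cannot come from $w'$; it has to happen inside each word. Two smaller slips: for $k\ge 3$ nothing is complemented ($\mu$ and $\nu$ are only swapped). And in group (3) the count is the full number $\overrightarrow{\nu_w}(i|\F)$ of $\F$'s to the right: $\nu_w$ has exactly $d_1+d_2$ letters $\F$, and none is excluded.

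Your own key observation already does the job once applied within a single word. Fix the $i$-th $k$ of $\underline{w}$, let $j=\pos_{\mu_w}(i)$, and let $c_i$ be the number of descents of $u$ after $b_j$. In your accounting this $k$ adds $b_j$ to group (1), one for each letter of $u$ before it. It contributes $1+c_i+(d_2-i)$ in group (2). Since $b_j=(j-1)+(d_1-c_i)$, the positions cancel exactly: $b_j+c_i=d_1+j-1$. Hence
\[
\maj(w)-\maj(u)=d_1d_2+\sum_{i=1}^{d_2}\bigl(\pos_{\mu_w}(i)+i-1\bigr)+\sum_{i=1}^{n-d_2}\overrightarrow{\nu_w}(i|\F),
\]
which involves only $d_1$, $\mu_w$ and $\nu_w$.

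Now write $\sum_i(i-1)=\sum_i\overrightarrow{\mu_w}(i|\T)$, and add the term $\sum_i\overrightarrow{\mu_w}(i|\F)$ that $\mu_w=\nu_{w'}$ contributes to $w'$. Then each $\T$ of $\mu_w$ yields $\pos_{\mu_w}(i)+\overrightarrow{\mu_w}(i|\T)+\overrightarrow{\mu_w}(i|\F)=\ell(\mu_w)=(k-1)n-d_1$. The $d_1d_2$ term brings the total to $d_2(k-1)n$. Symmetrically, using $\ell(\nu_w)=n+d_1$ and $d_1'd_2'=((k-2)n-d_1)(n-d_2)$, the $\nu_w$-terms give $(n-d_2)(k-1)n$. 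The sum is $(k-1)n^2$, and adding $\maj(u)+\maj(u')=\binom{k-1}{2}n^2$ completes the induction.
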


For our running example, one can check that $f_{4}(w_{-5}) = 142422331134$. Recalling that $\mu_w = \F\T\F\F\F\F$ and $\nu_w = \F\T\F\F\T\F\F\F\F$, we get  $\underline{f_{5}(w)} = 15424225331134$ and $f_{5}(w) = 154524225331134$.
It is easily verified that $\des(f_{5}(w)) = 12 - 7 = 5$ and $\maj(f_{5}(w)) = 90 - 58 = 32$.
It might help to see how $\Gamma$ and $f$ act on all of $S_3$ in \cref{tab:S3}.

\begin{table}[h!]
\begin{center}
\[
\begin{array}{|cccccccc|}
\hline
w \in S_3 & \mu_w & w_{-3} & g(w_{-3}, \mu_w) & \nu_w & h(g(w_{-3},\mu_w)) 
& \des & \maj\\
\hline
123 & \F\F & 12 & 12 & \T & 123 & 0 & 0\\
132 & \F\T & 12 & 132 & \F & 132 & 1 & 2\\
312 & \T\F & 12 & 312 & \F & 312 & 1 & 1\\
\hline
321 & \T & 21 & 321 & \F\F & 321 & 2 & 3\\
213 & \F & 21 & 21 & \F\T & 213 & 1 & 1\\
231 & \F & 21 & 21 & \T\F & 231 & 1 & 2\\
\hline
\end{array}
\]
\caption{The action of the map $\Gamma_3$ on $S_3$. The permutations are ordered so that $f_3$ maps the $i$'th row above the bar to the $i$'th row below the bar. Notice that the roles of $\mu_w$ and $\nu_w$ get swapped in these rows.}
\label{tab:S3}
\end{center}
\end{table}

For the proof of \cref{thm:Bk_major_index}, we introduce a little notation. For
a boolean array $\mu$, let $\overrightarrow{\mu}(i | \F)$ be the number of $\F$
occurring strictly to the right of the $i$'th $\T$ in $\mu$. We also define the
variants $\overleftarrow{\mu}(i | \F)$, $\overrightarrow{\mu}(i |
\T)$, and $\overleftarrow{\mu}(i | \T)$ similarly, always with respect to the $i$'th $\T$.
We also define $\pos_\mu(i)$ to be the position of the $i$'th $\T$ in $\mu$.

\begin{lem}
\label{lem:maj_reminder}
  Let $w \in \bwords{n}{k}$ for $k \ge 3$. 
Then,
  \[
    \maj(w) - \maj(w_{-k}) + \maj(f_k(w)) - \maj(f_k(w)_{-k}) = (k-1)n^2.
  \]
\end{lem}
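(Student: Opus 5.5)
The plan is to find an explicit formula for $\maj(w)-\maj(w_{-k})$ in terms of $d_1,d_2$, $\mu_w$ and $\nu_w$ only, with no other dependence on $w_{-k}$. Then I add this formula to the same one for $w'=f_k(w)$, using $\Gamma_k(w')=(f_{k-1}(w_{-k}),\nu_w,\mu_w)$. Here $d_1'=(k-2)n-d_1$ by the inductive hypothesis (\cref{thm:B2} for $k-1=2$, otherwise \cref{thm:Bk_major_index} for $k-1$), and $d_2'=n-d_2$ since $\mu_{w'}=\nu_w$ has $n-d_2$ $\T$'s.

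To get the formula, write each descent position $a$ of $w$ as (number of non-$k$ letters in $w_1\dots w_a$) $+$ (number of $k$'s in $w_1\dots w_a$). The descents of $w$ are of two kinds.

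\emph{Old descents.} These are the $d_1$ descents of $u=w_{-k}$, each moved to the end of any $k$-block inserted right after it. Their non-$k$ count is their position in $u$, so together they contribute $\maj(u)$.

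\emph{New descents.} These are the $d_2$ descents created by the $k$'s of $\underline{w}$ inserted by $g$ after non-descents $b_j$ of $u$. The $j$-th one sits at the inserted $k$, so its non-$k$ count is $b_j$.

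Next I count $k$'s weakly before each descent. There are four sources.

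\begin{itemize}
\item \emph{$\underline{w}$-$k$'s against new descents.} These give $\sum_{j=1}^{d_2} j$.
\item \emph{$\underline{w}$-$k$'s against old descents, together with the $\sum_j b_j$ term.} Write $b_j=(\pos_\mu(j)-1)+\#\{\text{descents of }u\text{ before }b_j\}$. Each pair (descent of $u$, inserted $k$) then contributes exactly $1$, whichever comes first. This yields $d_1d_2+\sum_j(\pos_\mu(j)-1)$, and the dependence on $u$ disappears.
\item \emph{$h$-inserted $k$'s.} By \cref{notn:general_nu}, the block $\T^{p_i}$ preceding the $i$-th $\F$ of $\nu_w$ lies before the $i$-th descent of $\underline{w}$, since that descent is pushed to the end of the block. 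So this contribution is $\sum_{\F\in\nu_w}(\#\T\text{'s to the left})$.
\end{itemize}

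The resulting identity is
\[
\maj(w)-\maj(w_{-k})=d_1d_2+\binom{d_2+1}{2}+\sum_{j=1}^{d_2}\bigl(\overleftarrow{\mu_w}(j|\F)+j-1\bigr)+\sum_{\F\in\nu_w}\#\{\T\text{ left of it}\}.
\]

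Now I add this to the same identity for $w'$. The $\mu$-term for $w$ counts ($\F$ before $\T$) pairs in $\mu_w$. The $\nu$-term for $w'$ counts ($\T$ before $\F$) pairs in the same array $\mu_w$. Together they give all $\T$–$\F$ pairs, namely $d_2\bigl((k-1)n-d_1-d_2\bigr)$. Symmetrically, $\nu_w$ contributes $(n-d_2)(d_1+d_2)$. The remaining terms are $d_1d_2+d_1'd_2'$ together with the two polynomial pieces $\binom{d_2+1}{2}+\binom{d_2}{2}$ and the analogous pieces in $n-d_2$.

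Summing everything should give a polynomial in $d_1,d_2$ that collapses to $(k-1)n^2$. I will confirm this by expanding, using $d_1'=(k-2)n-d_1$ and $d_2'=n-d_2$, and check it against the $S_3$ table and the running example.

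The main obstacle is the bookkeeping in the first step. I must pin down exactly which $k$'s are weakly before each descent, especially with the boundary conventions: the $\F$ in $\nu$ for the last element $\ell(\underline{w})$ of $\desset_+$, which is not a real descent, and the index $b=0$ in $\ndesset$. I also need the pair-counting argument that removes the dependence on how $u$'s descents interleave with the $b_j$. I would verify each contribution on the running example before trusting the final algebra.
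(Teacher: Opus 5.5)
Your proposal is correct and is essentially the paper's argument. You reach the same expression
$\maj(w)-\maj(w_{-k}) = d_1d_2+\sum_{i=1}^{d_2}(\pos_{\mu_w}(i)+i-1)+\sum_{i=1}^{n-d_2}\overrightarrow{\nu_w}(i|\F)$; the paper splits it through $\underline{w}$ and counts per inserted $k$, while you count per descent. You then use the swap $\mu_{f_k(w)}=\nu_w$, $\nu_{f_k(w)}=\mu_w$ so that the two one-sided $\T$--$\F$ pair counts of each array add to $\#\T\cdot\#\F$, which is the paper's identity $\pos_\mu(i)+\overrightarrow{\mu}(i|\T)+\overrightarrow{\mu}(i|\F)=\ell(\mu)$ in different form.

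The expansion you postponed does collapse to $(k-1)n^2$: all the $d_1d_2$, $d_2^2$, $nd_1$ and $nd_2$ terms cancel. Your boundary worry is also unnecessary, since $\nu_w$ has exactly $d_1+d_2$ letters $\F$, one per genuine descent of $\underline{w}$, and the final block $\T^{p_{d+1}}$ is followed by no $\F$.
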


\begin{proof}
  Let $w \in \bwords{n}{k}^{d_1,d_2}$. 
  For brevity, we write $f$ for $f_k$, $\mu = \mu_w$ and $\nu = \nu_w$. We
  will split the contribution of $k$ to the major index into two parts.
  We first look at $\maj(\underline{w}) - \maj(w_{-k})$.
  Here, we are concerned with the contribution of each $k$ in $\underline{w}$ to
  the major index and we want to capture this in terms of $\mu$. 
  For $i \in [d_2]$, let $a_i$ be the position of the $i$'th $k$ in 
  $\underline{w}$ and let $b_i$ be the number of descents in $w_{-k}$ 
  beginning at position $a_i-(i-1)$ respectively.
  Then
  \begin{align*}
    \maj(\underline{w}) - \maj(w_{-k}) &= \sum_{i=1}^{d_2} (a_i + b_i)\\
    &= d_2\des(w_{-k}) + \sum_{i=1}^{d_2} (a_i - (\des(w_{-k}) - b_i)).
  \end{align*}
  Now, $\des(w_{-k}) - b_i$ is the number of descents before the $i$'th descent in 
  $w_{-k}$. Thus, $a_i - (\des(w_{-k}) - b_i)$ counts one more than 
  the number of non-descents
  in $w_{-k}$ plus the number of $k$'s before the $i$'th $k$ in 
  $\underline{w}$. Expressed in terms of $\pos_\mu(i)$
  and $\overleftarrow{\mu}(i | \T)$, we thus get
  \[
  \maj(\underline{w}) - \maj(w_{-k}) =
  d_2\des(w_{-k}) + \sum_{i=1}^{d_2} \left(\pos_\mu(i) + \overleftarrow{\mu}(i|\T) \right).
  \]

  Second, we compute the contribution of the $k$'s not in $\underline{w}$. Let
  $r_1 < \dots < r_{n-d_2}$ be the positions of these $k$ in $w$
  and let
  $s_1,\dots,s_{n-d_2}$ be the descents in $\underline{w}$ occurring 
  starting at positions
  $r_1 - 1, \dots, r_{n - d_2} - (n - d_2)$ respectively. Because
  of the shift of positions of these descents, it is clear that
  \[
    \maj(w) - \maj(\underline{w}) = \sum_{i=1}^{n-d_2} s_i = \sum_{i=1}^{n-d_2} 
    \overrightarrow{\nu}(i | \F).
  \]
  and so
  \[
    \maj(f(w)) - \maj(f(\underline{w})) = \sum_{i=1}^{n-d_2} 
    \overrightarrow{\mu}(i | \F).
  \]  
  Putting the two components together, we get
  \begin{multline*}
   \maj(\underline{w}) - \maj(w_{-k}) + \maj(f(w)) - \maj(\underline{f(w)})\\
    = d_2\des(w_{-k}) + \sum_{i=1}^{d_2} \left(\pos_\mu(i) 
    + \overleftarrow{\mu}(i | \T) + \overrightarrow{\mu}(i | \F) \right).
  \end{multline*}
  Now, it is clear that $\sum_i \overleftarrow{\mu}(i | \T) = 
  \sum_i \overrightarrow{\mu}(i | \T)$. Plugging that in, the summand 
  becomes $\pos_\mu(i) + \overrightarrow{\mu}(i | \T) 
  + \overrightarrow{\mu}(i | \F)$,  which is exactly $\ell(\mu)$. Thus,
  \begin{multline*}
    \maj(\underline{w}) - \maj(w_{-k}) + \maj(f(w)) - \maj(\underline{f(w)})
    = d_2(\des(w_{-k}) + \ell(\mu))\\
    = d_2(\des(w_{-k}) + (k-1)n - \des(w_{-k}))
    = d_2(k-1)n.
  \end{multline*}
  By a similar argument, we also have
  \begin{align*}
    \maj(\underline{f(w)}) - \maj(f(w)_{-k}) + \maj(w) - \maj(\underline{w})
    = (n-d_2)(k-1)n.
  \end{align*}
  Adding these two gives the desired result.
\end{proof}

\begin{proof}[Proof of \cref{thm:Bk_major_index}]
We will prove this by induction on $k$. For $k = 2$, the result is proved in 
\cref{thm:B2}. So suppose $k > 2$ and let $w \in \bwords{n}{k}^{d_1,d_2}$. Then $\des(w) = d_1+d_2$,
  $\Gamma_k(w) = (w_{-k}, \mu_w, \nu_w)$ from \cref{notn:map_gamma}, and
  $f_{k-1}(w_{-k}) \in \bwords{n}{k-1}^{(k-2)n-d_1}$ by the induction 
  hypothesis.
  \[
    (f_{k-1}(w_{-k}), \nu_w, \mu_w) \in R_{n,k}^{(k-2)n-d_1,n-d_2} = 
    \bwords{n}{k-1}^{(k-2)n-d_1} \times B_{n + d_1}^{n-d_2} 
    \times B_{(k-1)n - d_1}^{d_2}
  \]
  from \cref{notn:general_mu,notn:general_nu}.
  By \cref{lem:Gamma}, we can define $f_k(w) = \Gamma_k^{-1}((f_{k-1}(w_{-k}), \nu_w, \mu_w))$ and $f_k(w) \in \bwords{n}{k}^{(k-2)n - d_1, n-d_2}$.  
  Therefore, 
  \[
  \des(f_k(w)) = (k-1)n - d_1 - d_2,
  \]
  proving the first part.

  Using \cref{lem:maj_reminder} and by the induction hypothesis,
  \begin{align*}
    \maj(w) + \maj(f(w)) = &\maj(w_{-k}) + \maj(f(w)_{-k}) \\
      &+\maj(w) - \maj(w_{-k}) + \maj(f(w)) - \maj(f(w)_{-k})\\
    = &\binom{k-1}{2}n^2 + (k-1)n^2 = \binom{k}{2}n^2,
  \end{align*}
  proving the second part.
\end{proof}

An illustration of \cref{thm:Bk_major_index} is given in \cref{tab:bijection_example}.

\begin{table}[h!]
  \centering
\begin{tabular}{|c|c|c|c|c|}
\hline
& $f_5$ & $f_4$ & $f_3$ & $f_2$ \\
\hline
$w$
  & $325544135121432$
  & $324413121432$
  & $321312132$
  & $211212$ \\

$\desset(w)$
  & $\{1,4,6,9,11,13,14\}$
  & $\{1,4,6,8,10,11\}$
  & $\{1,2,4,6,8\}$
  & $\{1,4\}$ \\

$\mu$
  & $\F\T\F\F\F\F$
  & $\F\F\F\T$
  & $\T\T\F\T$
  & $\T\F\T$ \\

$\nu$
  & $\F\T\F\F\T\F\F\F\F$
  & $\F\T\T\F\F\F\F\F$
  & $\F\F\F\F\F$
  & $\T\T\F$ \\

$f(w)$
  & $154524225331134$
  & $142422331134$
  & $122233113$
  & $122211$ \\

$\desset(f(w))$
  & $\{2,4,6,9,11\}$
  & $\{2,4,8\}$
  & $\{6\}$
  & $\{4\}$ \\
\hline
\end{tabular}  
\medskip
  \caption{Example of recursively computing $f_5(325544135121432)$.
  The descents of $w$ and $f(w)$ are written for convenience.}
  \label{tab:bijection_example}
\end{table}

From the proof, we immediately deduce the following.

\begin{cor}
\label{cor:proj}
The bijections satisfy the property $f_{k-1}(w_{-k}) = f_k(w)_{-k}$ for all $w \in 
\bwords nk$ for all $n \geq 1, k \geq 3$.
\end{cor}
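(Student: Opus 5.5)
This follows directly from the definition of $f_k$ together with \cref{lem:Gamma}, so no induction on $k$ is needed.

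Fix $w \in \bwords nk$ and set $w' = f_k(w)$. By definition, $w' = \Gamma_k^{-1}((f_{k-1}(w_{-k}), \nu_w, \mu_w))$. In the proof of \cref{thm:Bk_major_index} we saw that the triple $(f_{k-1}(w_{-k}), \nu_w, \mu_w)$ lies in $R_{n,k}^{(k-2)n-d_1,\,n-d_2}$, so the inverse is well defined. Applying $\Gamma_k$ to both sides, and using that $\Gamma_k$ is a bijection (\cref{lem:Gamma}), gives
\[
\Gamma_k(w') = (w'_{-k}, \mu_{w'}, \nu_{w'}) = (f_{k-1}(w_{-k}), \nu_w, \mu_w).
\]
Comparing first coordinates yields $w'_{-k} = f_{k-1}(w_{-k})$, which is the claim.

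The only point needing care is that the first component of $\Gamma_k$ really is the $k$-deleted subword. Concretely, one must check that $\Gamma_k^{-1}(u,\mu,\nu) = h(g(u,\mu),\nu)$ has $u$ as its $k$-deleted subword. This holds because $g$ only inserts letters $k$ into $u$ (after non-descent positions), and $h$ only inserts further letters $k$ (after positions of $\desset_+$). Deleting all $k$'s from $h(g(u,\mu),\nu)$ therefore returns $u$. This verification is the main, though mild, obstacle; everything else is unwinding definitions.
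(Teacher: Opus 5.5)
Your proposal is correct and matches the paper's argument, which reads the corollary off the first coordinate of $\Gamma_k(f_k(w)) = (f_{k-1}(w_{-k}), \nu_w, \mu_w)$, using the bijectivity of $\Gamma_k$ from \cref{lem:Gamma} and the well-definedness shown in the proof of \cref{thm:Bk_major_index}. Your extra check that $h(g(u,\mu),\nu)$ deletes back to $u$ is redundant once \cref{lem:Gamma} is granted, since the first coordinate of $\Gamma_k$ is $w_{-k}$ by definition, but it is a harmless direct confirmation.
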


\subsection{Permutations}

Note that permutations are a special case of balanced words with $n = 1$. Thus, one can study the map $f_k$ on the set of permutations, $S_k = \bwords 1k$. 
We note that the complementation map sending each letter $a \mapsto k  + 1 - a$ is also sufficient to bijectively prove \cref{thm:main}.
However, unlike $f_k$, this map does not reduce to $S_{k-1}$ correctly, 
i.e. does not satisfy \cref{cor:proj}.
For example, \cref{tab:permutation} shows that
 $f_8(46782513) = 76158342$ and $f_7(4672513) = 7615342$. 
We now show that $f_k$ inverts the set of descents, just like complementation.

\begin{thm}
\label{thm:perm_descent_set}
  Let $w \in S_k$. Then $\desset(f_k(w)) = [k-1] \setminus \desset(w)$.
\end{thm}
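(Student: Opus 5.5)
The plan is to argue by induction on $k$, using the recursive description $f_k(w)=\Gamma_k^{-1}((f_{k-1}(w_{-k}),\nu_w,\mu_w))$. For the base case $k=2$, $S_2=\{12,21\}$ and $f_2$ swaps them, so the statement holds. For $k\ge 3$ and $w\in S_k$, set $u=w_{-k}\in S_{k-1}$ and $D=\desset(u)\subseteq[k-2]$. By induction $u'=f_{k-1}(u)$ satisfies $\desset(u')=[k-2]\setminus D$, and by \cref{cor:proj} we have $f_k(w)_{-k}=u'$. So everything reduces to tracking where the single letter $k$ sits in $w$ and in $f_k(w)$.

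First I record a uniform formula. If $k$ is inserted immediately after position $j$ of $u$ (with $0\le j\le k-1$), then
\[
\desset(w)=\{i\in D: i<j\}\cup\{j+1\}\cup\{i+1: i\in D,\ i>j\},
\]
where $j+1$ is omitted when $j=k-1$. This holds whether or not $j\in D$, because $u_j<k>u_{j+1}$ always. Since $n=1$, there are exactly two cases.

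\textbf{Case 1:} $j\in\ndesset(u)$. Then $k$ survives in $\underline{w}$, so $d_2=1$. The array $\mu_w$ has its unique $\T$ at the index $t$ of $j$ in the ordered set $\ndesset(u)=\{0=c_0<c_1<\dots<c_m\}$, and $\nu_w=\F^{1+d_1}$.

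\textbf{Case 2:} $j\in\desset_+(u)$. Then $k$ is deleted in $\underline{w}$, so $d_2=0$. The array $\mu_w$ is all $\F$, and $\nu_w$ has its $\T$ at the index of $j$ in $\desset_+(u)$.

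Swapping $\mu$ and $\nu$ exchanges the two cases, so in Case 1 the letter $k$ of $f_k(w)$ is placed by $h$ after the $t$-th element $j'$ of $\desset_+(u')$. The key observation is
\[
\desset_+(u')=([k-2]\setminus D)\cup\{k-1\}=\{c_1<\dots<c_m<k-1\},
\]
so $j'$ is exactly the successor of $j$ in $\ndesset(u)\cup\{k-1\}$. In particular every integer strictly between $j$ and $j'$ lies in $D$.

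Now I compare the two descent sets using the formula above (with $D'=[k-2]\setminus D$ and $j'$ for $f_k(w)$), interval by interval:
\begin{itemize}
\item On $[1,j-1]$ they are $D$ and $D'$, which are complementary.
\item Position $j$ (if $j\ge1$) is not in $\desset(w)$, but $j\in D'$ with $j<j'$, so $j\in\desset(f_k(w))$.
\item Positions $j+1,\dots,j'$ all lie in $\desset(w)$ (namely $j+1$ together with the shifts of $j+1,\dots,j'-1\in D$), and none lie in $\desset(f_k(w))$, since $D'\cap(j,j')=\emptyset$.
\item Position $j'+1$ (if $\le k-1$) lies in $\desset(f_k(w))$ but not in $\desset(w)$, because $j'\notin D$.
\item Beyond $j'+1$ both sets are $\{i+1\}$ for $i$ in $D$ and in $D'$ respectively, which are complementary.
\end{itemize}
This settles Case 1.

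For Case 2, I would observe that the relation between $(j,u)$ and $(j',u')$ is symmetric: $j$ is the predecessor of $j'$ in $\{0\}\cup\ndesset(u')$-type sets. The cleanest route is to check that $f_k$ is an involution, via $f_{k-1}$ an involution and the $\mu,\nu$ swap, and then apply Case 1 to $f_k(w)$. I expect the main obstacle to be the careful identification of which index the $\T$ lands on after the swap, i.e.\ matching the $t$-th element of $\ndesset(u)$ with the $t$-th element of $\desset_+(u')$, along with the boundary cases $j=0$ and $j'=k-1$.
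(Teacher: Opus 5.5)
Your proposal is correct and takes essentially the same route as the paper. Both arguments use induction on $k$. Both reduce to the case where $\mu_w$ carries the $\T$, and handle the other case through the involution $f_k(f_k(w))=w$. Both use the hypothesis $\desset(u')=[k-2]\setminus\desset(u)$ to locate the new position $j'$ of $k$ as the successor of $j$ in $\ndesset(u)\cup\{k-1\}$, and then observe that $u$ has only descents strictly between the two insertion points. Your interval-by-interval comparison is a more careful version of the paper's final chain of equalities, including the boundary cases $j=0$ and $j'=k-1$. Just read the index $t$ as 1-based in both $\ndesset(u)$ and $\desset_+(u')$, so that $j=c_{t-1}$ and $j'=c_t$.
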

\begin{proof}
 This is clearly the case when $k=2$. Now suppose $k > 2$. Note that $n_{\T}(\mu_w) 
 + n_{\T}(\nu_w) = 1$. Without loss of generality, suppose $n_{\T}(\mu_w)=1$ and
 let $w_i = k$ (if not, we can work with $f_k(w)$ instead). Let $u = w_{-k}$, $u'= f_{k-1}(w_{-k})$, and $w'
 = f_k(w)$. We have
 \[
   \desset(w) = \desset(u_1\dots u_{i-1}) \cup \{i-1 + d \mid d \in \desset(k
   u_{i}\dots u_{k-1})\}.
 \]
 If $\T$ is at position $t$ in $\mu_w$, $w$ is constructed from $u$ by 
 inserting $k$ after the $(t-1)$'th
 non-descent (we insert at the beginning when $t=1$). On the other hand, let
 $j$ be the position of $k$ in $w'$, then we
 construct $w'$ from $u'$ by inserting $k$ after the
 $t$'th descent in $u'$ since $\nu_{f_k(w)} = \mu_w$. However, by
 our inductive assumption, the position $j-1$ in $u$ is the location of its $t$'th
 non-descent. 
 Hence, we can deduce that $i < j$, $\desset(u_i \dots u_j) = \{1, \dots,
 j-i-1\}$, and $\desset(u'_i \dots u'_j) = \emptyset$. Thus,
 \begin{align*}
   \desset(w') &= \desset(u'_1\dots u'_{j-1}) \cup \{j-1+d \mid d \in \desset(ku'_j
                 \dots u'_n)\}\\
               &= \desset(u'_1\dots u'_{i-1}) \cup \{i-1+d \mid d \in \desset(u'_i\dots u'_j ku'_{j+1}
                 \dots u'_n)\}\\
               &= \desset(u'_1\dots u'_{i-1}) \cup \left( \{i-1+d \mid d \in \desset(ku'_i\dots u'_n)\} \setminus \{i\} \right)\\
               &= \{1,\dots,k-1\} \setminus \desset(w).
 \end{align*}
\end{proof}

 \begin{table}[h!]
  \centering
  \begin{tabular}{|c|c|c|c|c|c|c|c|}
\hline
& $f_8$ & $f_7$ & $f_6$ & $f_5$ & $f_4$ & $f_3$ & $f_2$ \\
\hline
$w$
  & $46872513$
  & $4672513$
  & $462513$
  & $42513$
  & $4213$
  & $213$
  & $21$ \\

$\desset(w)$
  & $\{3,4,6\}$
  & $\{3,5\}$
  & $\{2,4\}$
  & $\{1,3\}$
  & $\{1,2\}$
  & $\{1\}$
  & $\{1\}$ \\

$\mu$
  & $\F\F\T\F\F$
  & $\F\F\F\F$
  & $\F\F\F$
  & $\F\F$
  & $\T\F$
  & $\F$
  & $\T$ \\

$\nu$
  & $\F\F\F$
  & $\T\F\F$
  & $\T\F\F$
  & $\F\T\F$
  & $\F\F$
  & $\F\T$
  & $\T$ \\

$\desset(f(w))$
  & $\{1,2,5,7\}$
  & $\{1,2,4,6\}$
  & $\{1,3,5\}$
  & $\{2,4\}$
  & $\{3\}$
  & $\{2\}$
  & $\emptyset$ \\

$f(w)$
  & $76158342$
  & $7615342$
  & $615342$
  & $15342$
  & $1342$
  & $132$
  & $12$ \\
\hline
\end{tabular}
\medskip
  \caption{Example of applying $f_8$ to the permutation $46782513$.}
  \label{tab:permutation}
 \end{table}

In the case of permutations, we get a simpler way to construct
  $f_n(w)$. 
  The definitions of $\ndesset$ in \eqref{ndesset} and $\desset_+$, 
  as well as \cref{thm:perm_descent_set} lead to the following observations.
  \begin{enumerate}
  \item   If $n$ is right after the $i$'th non-descent in $w_{-n}$,
  then the position of $n$ in $f_n(w)$ is one more than the position of the 
  $(i+1)$'th non-descent in $w_{-n}$.
  If $i$ is the last non-descent, then $n$ is at the end of $f_n(w)$.

  \item If $n$ is right after the $i$'th descent in $w_{-n}$, then
  the position of $n$ in $f_n(w)$ is one more than the position of 
  the $(i-1)$'th descent in $w_{-n}$. If $i$ is the first descent, then 
  $n$ is at the beginning of $f_n(w)$.
  \end{enumerate}

As an example, let us construct $f_8(46782513)$, the example in \cref{tab:permutation}. 

\begin{example}
Start with eight blanks $--------$.
  We see that $8$ occurs after the third non-descent in $w_{-8} = 4672513$,
  where we recall from \eqref{ndesset} that $0$ is the first non-descent.   
  Since the fourth non-descent is at position 4, $8$ must be added to the fifth blank: $----8---$. 
  Next, ignoring $8$ in $w$, $7$ occurs after the
  first descent in $w$ with $7,8$ removed. Hence, it must occur at the first blank: $7---8---$. Likewise,
  ignoring elements larger than $5$ in $w$, $6$ occurs after the first descent. Hence, it must occur at the first of the remaining blanks:
  $76--8---$. Next, $5$ occurs at the second descent. Hence, it must occur at
  the second blank: $76-58---$. 
Continuing this way, we get $f_8(46872513) = 76158342$.
\end{example}

\section{Discussion}

In this work, we have given a bijection on balanced words that simultaneously flips both the number of descents and the major index. The bijection is recursive on the size of the alphabet. Our bijection has many nice properties. For example, for balanced binary words, it also flips the number of ascents and the comajor index, as well as preserving mirror-symmetric words. For permutations, the bijection behaves nicely with respect to the restriction map.

We note that the inversion generating function of balanced words is also palindromic and has an easy bijective proof. However, our bijection does not behave nicely with inversions.

\section*{Acknowledgements}
We thank Angela Carnevale for many useful discussions and references to the literature. We also thank the anonymous reviewers for many useful comments.

\bibliographystyle{alpha}
\bibliography{main}

\end{document}